\documentclass[11pt]{amsart}
\usepackage[utf8x]{inputenc}
\usepackage[english]{babel}
\usepackage[T1]{fontenc}
 
\usepackage{graphicx}
\usepackage{caption}
\usepackage{subcaption}

\usepackage{amssymb,amsmath}
\usepackage{mathtools}

\usepackage{hyperref}
\usepackage[capitalise]{cleveref}

\usepackage{indentfirst}
\usepackage{enumerate,amsmath,amssymb, mathrsfs,mathtools}
\usepackage{appendix}
\usepackage{latexsym}
\usepackage{url}
\usepackage{color}
\usepackage{accents}
\usepackage{setspace}
\usepackage{pdfpages}
\usepackage{stmaryrd}

\usepackage[nobysame]{amsrefs}

\makeatletter
\@namedef{subjclassname@2020}{\textup{2020} Mathematics Subject Classification}
\makeatother

\usepackage[margin=2.5cm]{geometry}
\allowdisplaybreaks

\BibSpec{article}{%
	+{}{\PrintAuthors} {author}
	+{,}{ } {title}
	+{, }{\textit } {journal}
	+{}{ \parenthesize} {date}
		+{,  }{no. } {volume}
	+{,}{ } {pages}
	+{,}{ } {note}
}
\usepackage{bbm}

\newcommand{\id}{\operatorname{id}}
\def\Rho{\mbox{\textsf{P}}}
\newcommand{\tr}{\operatorname{tr}}
\newcommand{\Ric}{\operatorname{Ric}}

\ExplSyntaxOn

\NewExpandableDocumentCommand{\gobblefirst}{m}
{
	\tl_tail:n { #1 }
}

\ExplSyntaxOff
\BibSpec{arXiv}{%
  +{}{\PrintAuthors}{author}
  +{,}{ \textit}{title}
  +{}{ \parenthesize}{date}
  +{,}{ arXiv }{eprint}
}

\BibSpec{book}{%
	+{}{\PrintAuthors}  {author}
	+{. }{}{title}
	+{,}{ }{series}
	+{,}{ vol.~}{volume}
	+{. }{\textit}{publisher}
    +{}{ \parenthesize} {date}
	+{,}{ ISBN \gobblefirst}{isbn}
}
\BibSpec{collection.article}{%
	+{}{\PrintAuthors}{author}
	+{, }{}{title}
	+{, }{\textit}{booktitle}
	+{, }{ \DashPages}{pages}
	+{,}{ }{series}
	+{, }{}{volume}
	+{, }{\textit}{publisher}
	+{,}{ }{date}
}

\mathcode`l="8000
\begingroup
\makeatletter
\lccode`\~=`\l
\DeclareMathSymbol{\lsb@l}{\mathalpha}{letters}{`l}
\lowercase{\gdef~{\ifnum\the\mathgroup=\m@ne \ell \else \lsb@l \fi}}%
\endgroup

\def\XXint#1#2#3{{\setbox0=\hbox{$#1{#2#3}{\int}$ }
		\vcenter{\hbox{$#2#3$ }}\kern-.6\wd0}}

\newtheorem{proposition}{Proposition}

\newtheorem{definition}[proposition]{Definition}
\newtheorem{theorem}[proposition]{Theorem}

\newtheorem{remark}[proposition]{Remark}

\numberwithin{proposition}{section}
\numberwithin{equation}{section}

\title{Weyl structures for path geometries}
\author{Andreas \v Cap and Zhangwen Guo}

\thanks{This research was funded in whole or in part by the Austrian Science Fund
    (FWF): 10.55776/P33559 and 10.55776/Y963. For open access purposes, the authors
    have applied a CC BY public copyright license to any author-accepted manuscript
    version arising from this submission.  This article is based upon work from COST
    Action CaLISTA CA21109 supported by COST (European Cooperation in Science and
    Technology). https://www.cost.eu. }
\address{University of Vienna \newline \indent		Faculty of Mathematics  \newline \indent		Oskar-Morgenstern-Platz 1 \newline \indent 1090 Vienna,	Austria \newline\indent A.\v C.: \href{https://orcid.org/0000-0002-7745-3708}{https://orcid.org/0000-0002-7745-3708} \newline\indent Z.G.:		 \href{https://orcid.org/0009-0007-4027-4902}{https://orcid.org/0009-0007-4027-4902}        \newline \indent	\href{mailto:andreas.cap@univie.ac.at}{andreas.cap@univie.ac.at}	  \href{mailto:zhangwen.guo@univie.ac.atm}{zhangwen.guo@univie.ac.at}}
\subjclass{primary: 53C15; secondary: 53A40, 53B15}
\keywords{path geometry, distinguished connection, Weyl structure, tractor calculus, geometry of systems of ODE}

\begin{document}
\date{April 14, 2026}\onehalfspacing
\begin{abstract}
Path geometries provide a geometric encoding of systems of second order ODE, which serves as a model for the geometric theory of more general systems of ODE and for cone structures. They are an instance of the family of parabolic geometries, thus they are second order structures that are difficult to study using the usual tools of differential geometry. The general theory of parabolic geometries provides several efficient tools for the study of path geometries, but these use Cartan geometry methods and hence are not easily accessible. In this article, we build a bridge between these general methods and an elementary approach to path geometries. 

Motivated by the general theory of Weyl structures (but not using it), we first define a family of distinguished connections that is analogous to Webster-Tanaka connections in CR geometry. These are parametrized by (local) non-vanishing sections of a line bundle naturally associated to the geometry, and the dependence of this choice is described explicitly. We also discuss the Schouten tensor associated to such a choice and its dependence on the choice. We explain how these ingredients can be used to obtain an elementary approach to tractor calculus for path geometries and give examples of applications to the construction of invariant operators. 

A second major result that we prove is that in the case of path geometries, there is a smaller subclass of distinguished Weyl structures which does not seem to have an analog for any other type of parabolic geometries. This has interesting relations to the refinement of the de Rham complex induced by a path geometry via the machinery of BGG sequences. Again, all this is proved using elementary methods without reference to the general theory. 
\end{abstract}

    \maketitle 

\section{Introduction}

Path geometries were originally introduced by J.\ Douglas in \cite{Douglas} as a geometric description of systems of second order ODE which is independent of specific choices of coordinates. Classically, such a geometry on a smooth manifold $N$ is defined as a smooth family of paths (unparametrized curves) on $N$ such that for each point $x\in N$ and any line $\ell\subset T_xN$, there is a unique element in the family that contains $x$ and has tangent space $\ell$ in this point. To obtain a good description of such a geometry, one passes to the projectivzed tangent bundle $M:=\mathcal PTN$ of $N$, since the family of paths canonically lifts to a family on $\mathcal PTN$ with exactly one path through each point. As described in more detail in \S \ref{2.1} below, this lifted family can be equivalently described by a rank-one distribution $E$ which is transversal to the vertical subbundle $V$ of $\mathcal PTN\to N$ and contained in the so-called tautological subbundle $H$ of $\mathcal PTN$, which implies that $H=E\oplus V$. It tunrns out that this situation can be locally characterized by the (non-)integrability properties of $H=E\oplus V$, which then allows for a generalized concept, in which paths may only be defined in an open set of directions. 

In the language of systems of ODE, the equivalence problem for such structures was solved  by M.\ Fels in \cite{Fels}, exhibiting two fundamental invariants (one torsion and one curvature) which provide a complete obstruction to local isomorphism to the trivial geometry that corresponds to the equation $y''=0$. This was the basis for quite a lot of research on path geometries, see e.g. \cites{BGH,Grossman,Crampin-Saunders,CZ} which continue to be an active field. On the other hand, it was realized that the generalized version of path geometries falls into the class of parabolic geometries, see \cite{Book}, so in particular, they admit a canonical Cartan connection. This not only provides an alternative point of view on Fels' result, but also makes a number of general tools for the study of path geometries available. However, making these tools explicit for a specific type of structures is a non-trivial problem, and our article can be viewed as a contribution to this topic. At the same time, we want to stress the fact that while it motivates the developments, the general theory of parabolic geometries is not needed to follow the developments in this article. Most of our constructions use only elementary tools and the statements for which we appeal to the general theory can alternatively be checked directly. 

Our constructions are related to the general theory of Weyl structures for parabolic geometries, which was developed in \cite{Weyl}, see also Sections 5.1 and 5.2 of \cite{Book}. This provides a family of distinguished connections that come with additional distinguished geometric objects. They should be viewed as a (much more complicated) analog of the family of Webster-Tanaka connections, which play an important role in CR geometry. Let us briefly review Webster-Tanaka connections to put our results into perspective, more details can be found e.g.\ in Section 4 of \cite{Lee} or in \cite{Go-Gr}, see also Section 2.2 of \cite{Arman} and Section 5.2.12 of \cite{Book} for the non-integrable case. 

Recall that a CR structure of the relevant type on a manifold $M$ of odd dimension is given by a contact subbundle $H\subset TM$ which is endowed with an almost complex structure $J:H\to H$ that satisfies an integrability condition. Webster-Tanaka connections are associated to a choice of a (local or global) contact form  $\alpha\in\Omega^1(M)$, so for each $x$ in the domain of definition of $\alpha$, the kernel of $\alpha(x):T_xM\to\mathbb R$ is the contact subspace $H_x$. Such a contact form is well known to determine a Reeb vector field $r\in\mathfrak X(M)$ which has the property that $T_xM=H_x\oplus\mathbb R\cdot r(x)$ for any $x$. The Webster-Tanaka connection $\nabla^\alpha$ determined by $\alpha$ is then determined by the structural properties that $H$, $r$, and $J$ are parallel for $\nabla^\alpha$ plus normalization conditions on the torsion of $\nabla^\alpha$. 

In our situation, we also deal with a manifold $M$ of odd dimension $2n+1$, which is endowed with a subbundle $H$ of rank $n+1$, which is further decomposed as $H=E\oplus V$ where $E$ and $V$ have rank one and $n$, respectively. The canonical objects we construct are associated to a choice of scale, i.e.\ a local nowhere-vanishing section of the line bundle $E$. This has a natural interpretation as (locally) choosing parametrizations of the paths. More generally, we can start from a linear connection on the line bundle $E$. From this, we construct a subbundle of $TM$ which is complementary to $H$ and a linear connection on $TM$. (Note that for the choice of the complementary subbundle, there is much more freedom than in the case of Webster-Tanaka connections.) These data are uniquely characterized by the fact that the chosen section of $E$, the subbundle $V$ and the chosen complement all are parallel, and normalization conditions on the torsion, see Theorem \ref{thm:main} below for details. In case one starts with a connection on $E$, one has to require that $E$ is parallel and the induced connection is the chosen one.  

While existence and uniqueness of the connections we construct follow from the general theory of Weyl structures for parabolic geometries, our second main contribution has no known counterpart for any other type of parabolic geometries: In Section \ref{distinguished}, we identify a subclass of Weyl structures characterized by additional curvature conditions. Such special Weyl structures locally exist on any path geometry, so a stronger normalization is always possible locally. This subclass is related to a refinement of the de Rham complex on a manifold endowed with a path geometry, which is a very special instance of a Berstein-Gelfand-Gelfand sequence (BGG sequence) as introduced in \cite{CSS-BGG}. Again, we don't need input from the general theory but construct the part of the sequence that we need in a completely elementary way. 

In Section \ref{4}, we obtain an analog for path geometries of the elementary approach to tractor calculus for conformal and projective structures as introduced in \cite{BEG} building on classical work by T.\ Thomas, see \cite{Thomas}. We mainly discuss the standard tractor bundle and its canonical connection associated to a path geometry, from which more general tractor bundles and connections can be obtained via tensorial constructions. These bundles can then be used as a basis for the construction of invariant differential operators associated to path geometries. In the explicit formula for the tractor connection, another object canonically associated to a scale is needed, the so-called Schouten tensor $\Rho$, a $\binom02$-tensor field. We give an ad-hoc definition of this tensor, outlining why this coincides with the tensor provided by the general theory. Part of the approach is to derive how the objects associated to a scale behave under a change of scale, and we discuss this in Section \ref{4}, too.   

\section{Path geometries and Weyl structures}\label{2}
\subsection{(Generalized) path geometries}\label{2.1}
As briefly mentioned in the introduction, the classical definition of a \textit{path geometry} on a smooth manifold $N$ of dimension $n+1$ is a smooth family of paths (unparametrized curves) with exactly one curve through each point $x\in N$ in each direction defined by a line $\ell_x$ in $T_xN$. To describe this more explicitly and to clarify the notion of smoothness, it is advantageous to pass to the projectivized tangent bundle $\mathcal PTN$ of $N$, i.e.\ the space of all lines in tangent spaces of $N$. Via its tangent space in each point, any regularly parametrized curve in $N$ canonically lifts to $\mathcal PTN$ and this lifting is compatible with reparametrizations. Hence a family of paths on $N$ as above canonically lifts to a family of paths on $\mathcal PTN$ but now there is exactly one path through each point. The tangent spaces of the paths therefore determine a line in each tangent space of $\mathcal PTN$, so they define a rank one distribution $E\subset T\mathcal PTN$. Now the obvious condition is to call the family of paths on $N$ smooth if and only if $E$ is a smooth distribution on $\mathcal PTN$. Since any smooth distribution of rank one is involutive, one can recover the paths in the family as the projections of integral submanifolds for $E$. 

One can say more about the distribution $E$, however. Denote by $p:\mathcal PTN\to N$ the bundle projection. Let us also write $\ell_x$ for an element of $p^{-1}(\{x\})$, so this is a line in $T_xN$. Taking a tangent vector $\xi\in E_{\ell_x}\subset T_{\ell_x}\mathcal PTN$, the projected tangent vector $T_{\ell_x}p(\xi)$ by construction is tangent to the path in the family that goes through $x$ with tangent space $\ell_x$, so it is contained in $\ell_x$. The vectors with this property form the so-called \textit{tautological subbundle} $H\subset T\mathcal PTN$, which contains the vertical subbundle $V:=\ker(Tp)$ as a subbundle of corank one. Finally, for a non-zero vector $\xi\in E_{\ell_x}$, also $T_{\ell_x}p(\xi)$ is non-zero, so $E\cap V=\{0\}$ and hence $H=E\oplus V$. While $V$ is involutive by construction, it is well known that $H$ is maximally non-involutive. More precisely, denoting by $q:T\mathcal PTN\to T\mathcal PTN/H$ the projection, the map $\Gamma(E)\times\Gamma(V)\to \Gamma(T\mathcal PTN/H)$ defined by $(\xi,\eta)\mapsto q([\xi,\eta])$ is bilinear and hence induces a vector bundle homomorphism $E\otimes V\to T\mathcal PTN/H$, which turns out to be an isomorphism. Equivalently, if $\eta\in\Gamma(V)$ has the property that $[\xi,\eta](\ell_x)=0$ for one $\xi\in\Gamma(E)$ with $\xi(\ell_x)\neq 0$ then $\eta(\ell_x)=0$. In this form, there is an obvious ``abstract'' version of the structure as follows.

\begin{definition}\label{def2.1} 
Let $M$ be a smooth manifold of odd dimension $2n+1$. A (generalized) path geometry on $M$ is given by smooth distributions $E,V\subset TM$ of rank one and $n$, respectively, such that $E\cap V=\{0\}$, $V$ is involutive and for $H:=E\oplus V$ the bundle map $E\otimes V\to TM/H$ induced by the Lie bracket of vector fields is an isomorphism of vector bundles. We will denote this structure by $(M,E,V)$. 
\end{definition}

Given such a geometry $(M,E,V)$, involutivity of the distribution $V$ implies that there are local leaf spaces. So for sufficiently small open subsets $U\subset M$, one can find a surjective submersion $\psi:U\to N$ onto a smooth manifold $N$ of dimension $n+1$ such that for each $x\in U$ we get $\ker(T_x\psi)=V_x$, the fiber of $V$ at $x$. Since $E_x\cap V_x=\{0\}$, $T_x\psi(E_x)$ is a line in $T_{\psi(x)}N$ and this defines a lift $\Psi:U\to \mathcal PTN$ of $\psi$. It can be shown that the assumption of surjectivity of the map induced by the Lie bracket of vector fields then implies that $\Psi$ is a local diffeomorphism, which maps $V$ to the vertical subbundle of $\mathcal PTN\to N$ and $H$ to the tautological subbundle. Thus locally (also in the direction), a generalized path geometry is isomorphic to a path geometry in the classical sense. Hence we will drop the adjective ``generalized'' and just refer to $(M,E,V)$ as a path geometry in what follows.   

\begin{remark}\label{rem2.1}
There is a generalization of Definition \ref{def2.1}, in which instead of involutivity of $V$, one requires that the Lie bracket of two sections of $V$ is a section of $H=E\oplus V$. It turns out that if $2n+1\neq 5$, this condition acutally implies involutivity of $V$, see Proposition 4.4.4 of \cite{Book}. In dimension $5$, one obtains a more general notion which also leads to an additional fundamental invariant of the geometry. In fact, these more general geometries have interesting applications, in particular in the case that the distribution $V$ is generic, i.e.\ a so-called $(2,3,5)$-distribution, see \cite{An-Nurowski}. The construction of Weyl structures we present below can be generalized to all the structures in dimension $5$ satisfying the weaker condition.  We do not consider the more general situation in this article, since this would require special treatment in many places. 
\end{remark}

For a path geometry $(M,E,V)$, define $H:=E\oplus V$ and $Q:=TM/H$ and let  
$q:TM\twoheadrightarrow Q$ be the canonical quotient map. Recall that the Levi-bracket
\[\mathcal L:\Lambda^2 H\to Q\]
is the bundle map induced by the Lie bracket of vector fields, so it is characterized by $\mathcal L(\zeta_1(x),\zeta_2(x))=q([\zeta_1,\zeta_2](x))$. By Definition \ref{def2.1}, this vanishes on $\Lambda^2V\subset\Lambda^2H$ and restricts to an isomorphism $E\otimes V\to Q$. This has two important consequences: 
\begin{itemize}
    \item Together with sections $\xi\in\Gamma(E)$ and $\eta\in\Gamma(V)$, sections of the form $[\xi,\eta]$ span the tangent space in each point. 
    \item Having given linear connections on the subbundles $E$ and $V$ of $TM$ one can define a linear connection on the bundle $Q$ by requiring that $\mathcal L:E\otimes V\to Q$ is parallel.  
\end{itemize}

As discussed in the introduction, given a path geometry, we want to associate to a choice of scale a pair that consists of a linear connection on $TM$ and a subbundle of $TM$ that is complementary to $H$. For any such complement, $q:TM\to Q$ restricts to an isomorphism from the subbundle to $Q$, so the easiest way to describe it is via the inverse $\iota$ of this isomorphism. We will view this as an injective bundle map $\iota:Q\to TM$ such that $q\circ \iota=\id_Q$ and denote the complement by $\iota(Q)$. The second description we will use is via the projection $\Pi:TM\to H$ along $\iota(Q)$ that satisfies $\Pi|_H=\id_H$ and splits as $\Pi=\Pi_E\oplus\Pi_V$.      

Having given a linear connection $\nabla$ on $TM$, we can consider its torsion $\tau\in\Omega^2(M,TM)$ and its curvature $R\in \Omega^2(M,L(TM,TM))$, defined by the usual formulae for vector fields $\psi_i\in\mathfrak X(M)$. 
\begin{align*}\label{tors}
    \tau(\psi_1,\psi_2)&=\nabla_{\psi_1}\psi_2-\nabla_{\psi_2}\psi_1-[\psi_1,\psi_2]\\
    R(\psi_1,\psi_2)(\psi_3)&=\nabla_{\psi_1}\nabla_{\psi_2}\psi_3-\nabla_{\psi_2}\nabla_{\psi_1}\psi_3-\nabla_{[\psi_1,\psi_2]}\psi_3. 
\end{align*}
Having also given a decomposition $TM=E\oplus V\oplus\iota(Q)$ we can of course decompose $\tau$ and $R$ (as well as other tensor fields) accordingly. It will be crucial for the further developments that some of the resulting components do no depend on all the ingredients but need only partial information.

Recall finally, how the family of canonical objects should be parametrized. We will focus on the case that we start with a nowhere vanishing (local or global) section $\xi_0$ of $E$, which in this context we refer to as a (local) \textit{scale}. To a choice of scale, we associate a pair $(\nabla,\Pi)$, as discussed above, on the domain of definition of $\xi_0$. Since $E$ is a line bundle, a (local) scale can also be considered as a (local) trivialization of $E$. This trivialization determines a flat connection $\nabla^E$ on the restriction of $E$ to the domain of definition of $\xi_0$, which is characterized by the fact that $\nabla^E\xi_0=0$. Using a slight modification of the normalization condition, the construction can also be carried out starting from a general linear connection on $E$. We will not discuss this in detail but only provide some comments below, see in particular Remark \ref{rem:non-exact}. Now we are ready to formulate our first main result:

\begin{theorem}\label{thm:main}
    Let $(M,E,V)$ be a path geometry and let $\xi_0$ be a nowhere vanishing local smooth section of the subbundle $E\subset TM$ defined on an open subset $U\subset M$. Then there are a unique linear connection $\nabla$ on $TU$ and a unique decomposition $TU=E\oplus V\oplus \iota(Q)$ such that the following conditions are satisfied: 
    \begin{itemize}
        \item[(i)] The subbundles $E$, $V$ and $\iota(Q)$ of $TU$ are parallel for $\nabla$, and $\nabla\xi_0=0$. 
        \item[(ii)] The bundle map $\iota\circ\mathcal L:E\otimes V\to\iota(Q)$ is parallel for (the connections induced by) $\nabla$.
        \item[(iii)] The torsion $\tau$ of $\nabla$ satisfies $\tau(E,V)\subset \iota(Q)$ and $\tau(H,\iota(Q))\subset H$, and for sections $\eta_1,\eta_2\in\Gamma(V)$ and putting $\zeta_i:=\iota(\mathcal L(\xi_0,\eta_i))\in\Gamma(\iota(Q))$ we get 
        \begin{equation}\label{eq:norm}
            \mathcal L(\tau(\xi_0,\zeta_1),\eta_2)=2\mathcal L(\xi_0,\tau(\eta_1,\zeta_2)).
        \end{equation}
    \end{itemize}
\end{theorem}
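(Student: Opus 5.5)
Condition (i) says $\nabla$ preserves the decomposition $TU=E\oplus V\oplus\iota(Q)$. Hence it is determined by three connections: $\nabla^E$ on $E$, $\nabla^V$ on $V$ and $\nabla^Q$ on $\iota(Q)\cong Q$. The condition $\nabla\xi_0=0$ fixes $\nabla^E$ completely. Condition (ii) then determines $\nabla^Q$ from $\nabla^E$ and $\nabla^V$: since $\mathcal L:E\otimes V\to Q$ is an isomorphism, we must have
\[\nabla_\psi(\iota\mathcal L(\xi_0,\eta))=\iota\mathcal L(\xi_0,\nabla^V_\psi\eta).\]
So the unknowns are the connection $\nabla^V$ and the splitting $\iota$. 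I plan to fix a reference pair $(\nabla^V,\iota)$, write a general one as $\hat\iota=\iota+\Upsilon$ with $\Upsilon\in\Gamma(Q^*\otimes H)$ and $\hat\nabla^V=\nabla^V+A$ with $A\in\Omega^1(U,L(V,V))$, and then show that the torsion conditions in (iii) form a uniquely solvable affine linear system for $(\Upsilon,A)$.

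The first step is to compute each torsion component and see which unknowns it involves. For $\xi_0$ and $\eta\in\Gamma(V)$,
\[\tau(\xi_0,\eta)=\nabla_{\xi_0}\eta-[\xi_0,\eta].\]
Its $\iota(Q)$-part is $-\iota\mathcal L(\xi_0,\eta)$, which is automatic, and the condition $\tau(E,V)\subset\iota(Q)$ imposes the following.
\begin{itemize}
\item The $E$-component of $[\xi_0,\eta]$ must vanish after projection with $\Pi$. This involves only $\Pi_E$ restricted to the bracket terms, i.e.\ it constrains $\Upsilon$.
\item The $V$-component gives $\nabla^V_{\xi_0}\eta=\Pi_V([\xi_0,\eta])$, which determines $A(\xi_0)$ once $\iota$ is known.
\end{itemize}
For $\eta_1,\eta_2\in\Gamma(V)$, the component $\tau(V,V)$ lies in $V$ by involutivity, and the Levi bracket kills $\Lambda^2V$, so no condition is imposed there. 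The condition $\tau(H,\iota(Q))\subset H$ says that the $\iota(Q)$-component of
\[\nabla_{\xi_0}\zeta-\nabla_\zeta\xi_0-[\xi_0,\zeta]\qquad\text{and}\qquad\nabla_\eta\zeta-\nabla_\zeta\eta-[\eta,\zeta]\]
must vanish. Since $\nabla\xi_0=0$ and $\nabla_\zeta\eta\in V$, these become
\[\nabla^Q_{\xi_0}\zeta=q([\xi_0,\zeta])\qquad\text{and}\qquad\nabla^Q_\eta\zeta=q([\eta,\zeta]).\]
Via (ii) these determine $A(\xi_0)$ (a compatibility check with the previous step) and $A(\eta)$ for $\eta\in V$. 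The remaining freedom is $A(\zeta)$ for $\zeta\in\iota(Q)$, which I expect to be killed by \eqref{eq:norm}.

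Next I will organize the computation by homogeneity, with $E,V$ in degree $-1$ and $Q$ in degree $-2$, as in the general theory. Under the change $(\Upsilon,A)$, the relevant torsion components change by algebraic expressions that are linear in $\Upsilon$ and $A$, plus a few quadratic terms which I expect to be controlled by doing the two steps in order. First, the conditions on $\tau(E,V)$ and $\tau(H,\iota(Q))$ in lowest homogeneity fix $\Upsilon$ together with $A|_H$. Then $A|_{\iota(Q)}$ is fixed by the $\iota(Q)\times H$ components. The changes take the following form.
\begin{itemize}
\item Changing $\iota$ by $\Upsilon$ alters the $E$-part of $\Pi[\xi_0,\eta]$ by $-\Upsilon_E(\mathcal L(\xi_0,\eta))$. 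So $\tau(E,V)_E=0$ fixes $\Upsilon_E$ uniquely, because $\mathcal L$ is an isomorphism.
\item $\Upsilon_V$ enters in the $V$- and $E$-components of $\tau(\xi_0,\zeta_1)$ and $\tau(\eta_1,\zeta_2)$ through terms like $\nabla_{\Upsilon(\cdot)}$ and the brackets. Together with $A(\zeta)$ it has to be fixed by the components of $\tau(H,\iota(Q))$ in $H$, subject to \eqref{eq:norm}.
\end{itemize}

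The main obstacle is this last algebraic step. The unknowns $\Upsilon_V\in Q^*\otimes V\cong E^*\otimes V^*\otimes V$ and $A|_{\iota(Q)}\in Q^*\otimes V^*\otimes V$ have the same size, namely $E^*\otimes V^*\otimes V\otimes V^*$. The normalization \eqref{eq:norm} is a single tensorial identity in $E^*\otimes V^*\otimes V^*\otimes Q$ for the remaining freedom. I would write the change of both sides of \eqref{eq:norm} explicitly: the left side involves $\mathcal L(\Upsilon_V$-terms$,\eta_2)$ and $A(\zeta_1)$, the right side $\mathcal L(\xi_0,A(\zeta_2)\eta_1)$ and $\Upsilon$-terms. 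I would then check that the resulting linear map $(\Upsilon_V,A|_{\iota(Q)})\mapsto$(changes of these torsion components) is injective and hits exactly the normalized subspace, i.e.\ that it is bijective. I expect the factor $2$ to be precisely what makes the system nondegenerate, mirroring the fact that in the general theory the normalization is $\partial^*$-closedness of the torsion, which has no kernel in positive homogeneity. If this direct count becomes messy, I would fall back on identifying $(\nabla,\Pi)$ with the Weyl structure of \cite{Weyl} determined by the exact Weyl connection with $\nabla^E\xi_0=0$, and verify that (i)–(iii) are equivalent to normality.
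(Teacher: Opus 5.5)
Your reduction to the two unknowns $\nabla^V$ and $\iota$ is correct. So is your handling of $\Pi_E$ (from the $E$-component of $\tau(E,V)$) and of $\nabla^V_\eta$, $\eta\in\Gamma(V)$ (from $q(\tau(V,\iota(Q)))=0$). In one place you even say that the lowest-homogeneity conditions fix $\Upsilon$ together with $A|_H$, which is the right shape.

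\textbf{The gap: you never identify the equation that fixes $\Pi_V$, i.e.\ your $\Upsilon_V$.} You call $q(\tau(\xi_0,\zeta))=0$ a ``compatibility check'' for $A(\xi_0)$, but it is the key equation. Since $\Pi_E([\xi_0,\eta])=0$ and $\nabla^V_{\xi_0}\eta=\Pi_V([\xi_0,\eta])$, you have $\zeta=\iota(\mathcal L(\xi_0,\eta))=[\xi_0,\eta]-\nabla^V_{\xi_0}\eta$. Hence $q([\xi_0,\zeta])=q([\xi_0,[\xi_0,\eta]])-\mathcal L(\xi_0,\nabla^V_{\xi_0}\eta)$, and the condition becomes $2\mathcal L(\xi_0,\nabla^V_{\xi_0}\eta)=q([\xi_0,[\xi_0,\eta]])$. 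In your affine language, once $A(\xi_0)$ is tied to $\Upsilon_V$ by the $V$-component of $\tau(E,V)$, $\Upsilon_V$ enters this $Q$-component twice with the same sign, once through $\nabla^V_{\xi_0}$ and once through $\zeta$. The total change is $-2\mathcal L(\xi_0,\Upsilon_V(\mathcal L(\xi_0,\eta)))$, which is invertible. This is what determines $\Pi_V$ and $\nabla^V$ in $E$-directions, and it is the heart of the paper's Proposition \ref{prop:homog1}. The resulting factor $\tfrac12$ is also what makes the paper's explicit formula satisfy the Leibniz rule.

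\textbf{Why the last step fails as planned.} Your ``main obstacle'' paragraph tries to determine $\Upsilon_V$ together with $A|_{\iota(Q)}$ from \eqref{eq:norm}, and that cannot work. The sizes do not match as you claim. $\Upsilon_V\in\Gamma(Q^*\otimes V)$ has rank $n^2$, while $A|_{\iota(Q)}\in\Gamma(Q^*\otimes V^*\otimes V)$ has rank $n^3$. For fixed $\xi_0$, \eqref{eq:norm} is an identity in $V^*\otimes V^*\otimes Q$, again of rank $n^3$. So the linear map you want to prove bijective cannot be bijective. Moreover, the left hand side of \eqref{eq:norm} equals $-\mathcal L(\Pi_E([\xi_0,\zeta_1]),\eta_2)$. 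It involves no $A$, because $\nabla\xi_0=0$, and no $\Upsilon_V$, because $\Pi_E([\xi_0,v])=0$ for $v\in\Gamma(V)$. This contradicts your expectation about which unknowns appear there.

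\textbf{What the last step actually is.} Once $\Pi$ is fixed, \eqref{eq:norm} reads $\alpha([\xi_0,\zeta_1])\mathcal L(\xi_0,\eta_2)=2\mathcal L(\xi_0,\nabla^V_{\zeta_2}\eta_1+\Pi_V([\eta_1,\zeta_2]))$ with $\Pi_E=\alpha\xi_0$. This determines $\nabla^V_{\zeta_2}\eta_1$ directly. Any non-zero coefficient would do here, so the $2$ is a normalization, not the source of non-degeneracy.

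With the missing step inserted in the right order, your reference-plus-correction scheme would work, and it has the small advantage that tensoriality and the Leibniz rule come for free. As written, though, the decisive computation is only announced and is organized around the wrong unknowns. Your fallback to the general theory would at best give existence: uniqueness among all pairs satisfying (i)--(iii) still requires exactly the computation above.
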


\begin{remark}\label{rem:norm}
(1) Let us elaborate on the condition \eqref{eq:norm} a bit. First note that this condition makes sense since we also require that $\tau(H,\iota(Q))\subset H$. Moreover, the left hand side depends only on the $E$-component of $\tau(\xi_0,\zeta_1)$, while the right hand side depends only on the $V$-component of $\tau(\eta_1,\zeta_2)$. 

The map $\iota(\mathcal L(\xi_0,\_))$ defines an isomorphism $V\to\iota(Q)$ of vector bundles. Using this, we can interpret the component $V\times \iota(Q)\to V$ of $\tau$, as a bilinear bundle map $V\times V\to V$. Now \eqref{eq:norm} says that there actually is a section $\phi\in\Gamma(V^*)$  (characterized by the fact that the $E$-component of $\tau(\xi_0,\zeta_1)\in\Gamma (H)$ equals $2\phi(\eta_1)\xi_0$) such that this map is of the form $(\eta_1,\eta_2)\mapsto\phi(\eta_1)\eta_2$. So if one decomposes the above bundle map $V\times V\to V$ into trace parts and trace-free parts, this says that all components except one trace part vanish and that trace part is completely determined by another component of the torsion. 

(2) Observe that the uniqueness statement in Theorem \ref{thm:main} implies a compatibility with morphisms. Suppose that $(M,E,V)$ and $(\tilde M,\tilde E,\tilde V)$ are path geometries and that $f:M\to\tilde M$ is a local diffeomorphism such that, for each $x\in M$, $T_xf:T_xM\to T_{f(x)}\tilde M$ maps $E_x$ to $\tilde E_{f(x)}$ and $V_x$ to $\tilde V_{f(x)}$. Then for a (local) scale $\tilde\xi_0\in\Gamma(\tilde E)$, the pullback $f^*\tilde\xi_0$ is a (local) scale on $M$ and we can also pull back of the constituents of the Weyl structure on $\tilde M$ determined by $\tilde\xi_0$ to $M$. From the definitions, it also follows that pullbacks are compatible with $\mathcal L$ and with torsions of linear connections. Hence it follows readily that the pullbacks of the objects on $\tilde M$ satisfy all characterizing properties of the Weyl structure determined by $f^*\tilde\xi_0$. Hence any morphism pulls back (in an obvious sense) the Weyl structure determined by a scale to the Weyl structure determined by the pullback of the scale.     

(3) Although this is not needed formally, let us briefly explain (without discussing the background) why the Weyl structures we construct coincide with the ones obtained from the general theory of parabolic geometries. The latter Weyl connections by construction have properties (i) and (ii) of Theorem \ref{thm:main}, so in view of the uniqueness statement in that theorem, it remains to explain why they also satisfy (iii). The relation between the curvature and the torsion of a Weyl connection and the curvature $\kappa$ of the canonical Cartan connection is described in Theorem 5.2.9 and Lemma 5.2.10 (1) of \cite{Book}. This involves an additional object associated to a choice of scale, the \textit{Schouten tensor} $\Rho$ that will be discussed in \S \ref{4.4} below, which enters via its image under a certain tensorial operation denoted by $\partial(\Rho)$. Then all the conditions in (iii) apart from \eqref{eq:norm} readily follow from the fact that $\kappa$ is concentrated in homogeneities $\geq 2$, see Section 4.4.3 of \cite{Book}. The description of the homogeneity-two component of $\kappa$ from there also implies that the two components of the torsion that occur in \eqref{eq:norm} come from the corresponding components of $\partial(\Rho)$. Making $\partial(\Rho)$ explicit immediately shows that \eqref{eq:norm} is equivalent to the fact that for $\xi\in\Gamma(E)$ and $\eta\in\Gamma(V)$, one gets $\Rho(\xi,\eta)+2\Rho(\eta,\xi)=0$. See Remark \ref{rem:Rho} below for an explanation on how to obtain this from known properties of $\kappa$. 
\end{remark}

The proof of Theorem \ref{thm:main} will be carried out in two steps, which are formulated as propositions.  Observe that a path geometry can be restricted to open subsets without problems, so in the proof, it suffices to consider the case that $\xi_0$ is globally defined . In the first step, we construct the subbundle $\iota(Q)$ and the restriction of $\nabla$ to a \textit{partial connection}, i.e.\ to an operator $\Gamma(H)\times\mathfrak X(M)\to\mathfrak X(M)$ which has the defining properties of a linear connection (linearity over smooth functions in the first variable and Leibniz rule in the second variable). Notice that the property of a subbundle of $TM$ being parallel makes sense in the setting of partial connections. 

\begin{proposition}\label{prop:homog1}
    Let $(M,E,V)$ be a path geometry and let $\xi_0\in\Gamma(E)$ be a nowhere vanishing section. Then we can uniquely define a projection $\Pi=\Pi_E\oplus\Pi_V:TM\to H$ and partial connections $\nabla^E$, $\nabla^V$ and $\nabla^{\iota(Q)}$ on the indicated bundles by the following properties for $\xi=f\xi_0\in \Gamma(E)$ where $f\in C^\infty(M)$ and $\eta,\eta_1,\eta_2\in\Gamma(V)$.
\begin{equation}\label{homog1-props}
\begin{aligned}
\begin{cases}
\nabla^E\xi&=df\otimes\xi_0\\
\mathcal L(\xi_0,\nabla^V_{\xi}\eta)=\mathcal L(\xi_0,\Pi_V([\xi,\eta]))&=\frac 1 2 q([\xi,[\xi_0,\eta]])\\
\Pi_E([\xi,\eta])&=-df(\eta)\xi_0\\
\mathcal L(\xi_0,\nabla^V_{\eta_1}\eta_2)&=q([\eta_1,[\xi_0,\eta_2]])\\
\nabla^{\iota(Q)}\iota(\mathcal L(\xi,\eta))&=\iota(\mathcal L(\nabla^E\xi,\eta)+\mathcal L(\xi,\nabla^V\eta)).
\end{cases}
\end{aligned}
\end{equation}
The projection $\Pi$ and the partial connection $\nabla$ on $TM$ obtained from $\nabla^E$, $\nabla^V$ and $\nabla^{\iota(Q)}$ via the decomposition $TM=E\oplus V\oplus\iota(Q)$  satisfy conditions (i) and (ii) and the first two properties in condition (iii) of Theorem \ref{thm:main}, which in turn determine them uniquely.  
\end{proposition}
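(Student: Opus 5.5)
I would prove this in two directions: first that conditions (i), (ii) and the first two parts of (iii) force the formulas \eqref{homog1-props}, which gives uniqueness; then that these formulas are well defined and really produce a projection and partial connections with the required properties, which gives existence.

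\textbf{Uniqueness.} Assume $\Pi$ and a partial connection $\nabla$ satisfy (i), (ii) and the conditions $\tau(E,V)\subset\iota(Q)$, $\tau(H,\iota(Q))\subset H$.

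First, $E$ is parallel and $\nabla\xi_0=0$, so the Leibniz rule gives $\nabla^E(f\xi_0)=df\otimes\xi_0$.

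Next take $\xi=f\xi_0$ and $\eta\in\Gamma(V)$. Then $\tau(\xi,\eta)=\nabla_\xi\eta-\nabla_\eta\xi-[\xi,\eta]$, where $\nabla_\xi\eta\in V$ and $\nabla_\eta\xi=df(\eta)\xi_0\in E$. Applying $\Pi$ and using $\tau(E,V)\subset\iota(Q)$ yields two identities:
\begin{itemize}
\item $\Pi_E([\xi,\eta])=-df(\eta)\xi_0$;
\item $\nabla^V_\xi\eta=\Pi_V([\xi,\eta])$.
\end{itemize}

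To obtain the formula for $\nabla^V_\xi\eta$ in terms of $q$, use $\tau(H,\iota(Q))\subset H$ in the form $q(\tau(\xi,\zeta))=0$ with $\zeta=\iota(q([\xi_0,\eta]))$. This gives
\[
q([\xi,\zeta])=q(\nabla_\xi\zeta).
\]
By (ii) and $\nabla\xi_0=0$, the right-hand side equals $\mathcal L(\nabla_\xi\xi_0,\eta)+\mathcal L(\xi_0,\nabla_\xi\eta)=\mathcal L(\xi_0,\nabla_\xi\eta)$. On the other hand, $[\xi_0,\eta]=\zeta+\Pi([\xi_0,\eta])$. I then compute $q([\xi,[\xi_0,\eta]])$ by substituting this decomposition and applying the Levi bracket to the $H$-part $\Pi([\xi_0,\eta])$. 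The $E$-component vanishes because $\Pi_E([\xi_0,\eta])=0$. The $V$-component contributes $\mathcal L(\xi,\Pi_V[\xi_0,\eta])=f\,\mathcal L(\xi_0,\nabla_{\xi_0}\eta)$. Comparing with $\nabla_\xi\eta=f\nabla_{\xi_0}\eta$ gives the factor $\tfrac12$. The same argument with $\eta_1$ in place of $\xi$ gives the formula for $\nabla^V_{\eta_1}\eta_2$, now with no factor, since $\mathcal L(\eta_1,\Pi_V(\cdot))=0$ because $\mathcal L$ vanishes on $\Lambda^2V$. Finally, $\nabla^{\iota(Q)}$ is forced by (ii). Thus every ingredient is determined by \eqref{homog1-props}.

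\textbf{Existence.} I read the formulas in \eqref{homog1-props} as definitions.
\begin{itemize}
\item $\Pi_V$ on $\iota(Q)$-directions: the expression $\tfrac12 q([\xi,[\xi_0,\eta]])$ defines $\Pi_V([\xi,\eta])$ through the isomorphism $\mathcal L(\xi_0,\_):V\to Q$.
\item $\Pi_E$: defined by the third line.
\end{itemize}
Since brackets $[\xi_0,\eta]$ together with $H$ span $TM$, $\Pi$ is determined by its values on $H$ (the identity) and on the $[\xi_0,\eta]$.

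The main technical point is checking that $\Pi$ is a well-defined bundle map, i.e.\ tensorial. For $\eta\mapsto g\eta$ one has $[\xi_0,g\eta]=g[\xi_0,\eta]+(\xi_0\cdot g)\eta$, and the extra term lies in $H$, where $\Pi$ is the identity. So one must verify that
\[
\eta\mapsto \tfrac12 q([\xi_0,[\xi_0,\eta]])
\]
transforms so that the resulting $\Pi_V([\xi_0,g\eta])$ picks up exactly $(\xi_0\cdot g)\eta$. Expanding gives $g\,q([\xi_0,[\xi_0,\eta]])+2(\xi_0\cdot g)\,q([\xi_0,\eta])$, because the second-derivative term is killed by $q$. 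The factor $2$ is exactly compensated by the $\tfrac12$. This is the step where the constants matter.

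Similarly, I check consistency of the formulas with respect to $f$ in $\xi=f\xi_0$, and verify the Leibniz rule and function-linearity in the lower slot for $\nabla^V_\xi$ and $\nabla^V_{\eta_1}$. For $\nabla^V_{\eta_1}$, linearity in $\eta_1$ is clear, and the Leibniz rule in $\eta_2$ follows from $q([\eta_1,h[\xi_0,\eta_2]])=h\,q([\eta_1,[\xi_0,\eta_2]])+(\eta_1\cdot h)\mathcal L(\xi_0,\eta_2)$ together with the fact that $\mathcal L$ vanishes on $\Lambda^2V$. For $\nabla^{\iota(Q)}$, well-definedness follows because the right-hand side of the last line of \eqref{homog1-props} is bilinear over functions in the appropriate sense, by the Leibniz rules for $\nabla^E$ and $\nabla^V$.

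Properties (i) and (ii) then hold by construction. The torsion conditions are obtained by reversing the computations in the uniqueness step. For $\tau(\eta_1,\zeta)\in H$ and $\tau(\xi,\zeta)\in H$, this reduces to the identities just used, once extended from $\zeta=\iota\mathcal L(\xi_0,\eta)$ to general $\zeta$ by tensoriality of $\tau$.

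I expect the main obstacle to be the bookkeeping in this tensoriality check, in particular the interplay of the $\tfrac12$ with the derivative terms.
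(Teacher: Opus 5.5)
Your proposal is correct and follows essentially the paper's argument. The key step, rewriting $q(\tau(\xi,\zeta))=0$ via $\zeta=[\xi_0,\eta]-\nabla^V_{\xi_0}\eta$ to obtain $q([\xi,[\xi_0,\eta]])=2\mathcal L(\xi_0,\nabla^V_\xi\eta)$ (and the analogous identity without the factor $\tfrac12$ for $\eta_1$), is the same, and your tensoriality check for $\Pi$ is the Leibniz rule for $\nabla^V_{\xi_0}$ that the paper uses to show that $\Pi(\psi):=\nabla^V_{\xi_0}\eta+(\psi-[\xi_0,\eta])$ is a bundle map. The differences are only organizational: the paper proves in one pass that each condition is equivalent to the corresponding defining formula, whereas you treat uniqueness and existence separately and define $\Pi$ before $\nabla^V$.
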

\begin{proof}
Of course, the first line of \eqref{homog1-props} uniquely defines a (partial) linear connection on $E$ such that $\nabla^E\xi_0=0$. Since $\mathcal L(\xi_0,\_):V\to Q$ is bijective, the second and fourth line of \eqref{homog1-props} uniquely define sections $\nabla^V_\xi\eta$, and $\nabla^V_{\eta_1}\eta_2$ of $V$. Hence we obtain an operator $\nabla^V:\Gamma(H)\times\Gamma(V)\to\Gamma(V)$. Using standard properties of the Lie bracket, one readily verifies that this operator in linear over smooth functions in the first variable and satisfies a Leibniz rule in the second variable (and the factor $\tfrac12$ in the second line is crucial for this). Hence we have defined partial connections on $E$ and $V$. 

For a vector field $\zeta\in\mathfrak X(M)$, the section $q(\zeta)\in\Gamma(Q)$ determines a unique $\eta\in\Gamma(V)$ such that $q(\zeta)=\mathcal L(\xi_0,\eta)$ and hence $\zeta-[\xi_0,\eta]\in\Gamma(H)$. Now we define an operator $\Pi:\mathfrak X(M)\to\Gamma(H)$ by $\Pi(\zeta):=\nabla^V_{\xi_0}\eta+(\zeta-[\xi_0,\eta])$. The Leibniz rule for $\nabla^V$ then readily implies that $\Pi$ is linear over smooth functions and hence defines a bundle map $TM\to H$. Since for $\zeta\in\Gamma(H)$ we get $\eta=0$ and hence $\Pi(\zeta)=\zeta$, this is a projection as required. This clearly is consistent with the second and third lines of \eqref{homog1-props} and determined by them. Thus we have constructed all claimed objects. 

Condition (i) in Theorem \ref{thm:main} on the partial connection underlying $\nabla$ is equivalent to the fact that it is the direct sum of partial connections on the subbundles $E$, $V$ and $\iota(Q)$, with the connection on $E$ defined by the first line of \eqref{homog1-props}. Knowing this, condition (ii) in Theorem \ref{thm:main} is equivalent to the last line in \eqref{homog1-props}. The first property in condition (iii) of Theorem \ref{thm:main} exactly says that $\Pi([\xi,\eta])=\nabla^V_\xi\eta-\nabla^E_\eta\xi$. Hence it depends only on the underlying partial connection and is equivalent to the first equality in the second line and the third line in \eqref{homog1-props}. 

So it remains to analyze the second property in condition (iii) of Theorem \ref{thm:main}. For this we consider $\zeta:=\iota(\mathcal L(\xi_0,\eta))$, so we already know that $q(\nabla^{\iota(Q)}\zeta)=\mathcal L(\xi_0,\nabla^V\eta)$. While we don't know derivatives in $\iota(Q)$-directions yet, they will preserve the subbundle $H$ and hence will not contribute to the image of the torsion under $q$. Hence $0=q(\tau(\xi,\zeta))$ amounts to $\mathcal L(\xi_0,\nabla^V_\xi\eta)=q([\xi,\zeta])$. But from the description of $\Pi([\xi_0,\eta])$ we conclude that $\zeta=[\xi_0,\eta]-\nabla^V_{\xi_0}\eta$, so $q([\xi,\zeta])=q([\xi,[\xi_0,\eta]])-\mathcal L(\xi,\nabla^V_{\xi_0}\eta)$, and of course we can swap $\xi$ and $\xi_0$ in the last term. Thus we are left with $q([\xi,[\xi_0,\eta]])=2\mathcal L(\xi_0,\nabla^V_\xi\eta)$, which forces the rest of the second line of \eqref{homog1-props} and this equation obviously holds for any $\xi$ if and only if it holds for $\xi=\xi_0$. Taking another section $\eta_1\in\Gamma(V)$ we see in the same way that $0=q(\tau(\eta_1,\zeta))$ amounts to $\mathcal L(\xi_0,\nabla^V_{\eta_1}\eta)=q([\eta_1,[\xi_0,\eta]])$ and hence to the fourth line in \eqref{homog1-props}. 
\end{proof}

We remark that the projection $\Pi_E$ associated to $\xi_0\in\Gamma(E)$ can also be interpreted as a one-form $\alpha\in\Omega^1(M)$ characterized by $\Pi_E=\alpha\xi_0$, i.e.\ $\Pi_E(\zeta)=\alpha(\zeta)\xi_0$ for any vector field $\zeta\in\mathfrak X(M)$. This form will play a major role in the discussion of distinguished Weyl structures below. 

To complete the proof of Theorem \ref{thm:main}, it remains to define covariant derivatives in directions contained in $\iota(Q)$, which is done in the following result. 
\begin{proposition}\label{prop:homog2}
    Let $(M,E,V)$ be a path geometry, $\xi_0\in\Gamma(E)$ be a nowhere vanishing section, $\Pi:TM\to H$ the projection from Proposition \ref{prop:homog1} and $\alpha\in\Omega^1(M)$ the one-form such that $\Pi_E=\alpha\xi_0$. Then the partial connections $\nabla^E$, $\nabla^V$ and $\nabla^{\iota(Q)}$ from this proposition can be uniquely extended to linear connections by requiring that $\nabla^E\xi_0=0$ and the following conditions for $\eta,\tilde\eta\in\Gamma(V)$ and $\zeta=\iota(\mathcal L(\xi_0,\eta))\in\Gamma(\iota(Q))$:
    \begin{equation}\label{homog2-props}
    \begin{aligned}
     \nabla^V_{\zeta}\tilde\eta&=\Pi_V([\zeta,\tilde\eta])+\tfrac12\alpha([\xi_0,[\xi_0,\tilde\eta]])\eta\\
     \nabla^{\iota(Q)}\zeta &=\iota(\mathcal L(\xi_0,\nabla^V\eta)).    
    \end{aligned}
    \end{equation}
    Combining the connections to a linear connection $\nabla$ on $TM$, the resulting pair $(\nabla,\Pi)$ satisfies all conditions from Theorem \ref{thm:main} and is uniquely determined by them. 
\end{proposition}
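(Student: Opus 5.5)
Proposition \ref{prop:homog1} already fixes $\Pi$ and $\nabla$ in $H$-directions, and the conditions of Theorem \ref{thm:main} used there determine them. So it remains to define derivatives in $\iota(Q)$-directions, to show that \eqref{homog2-props} gives well defined connections, and to show that the remaining conditions of Theorem \ref{thm:main} are equivalent to \eqref{homog2-props}. I would first show that these conditions force the formulas; then I would use the formulas as definitions and check well-definedness.

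\textbf{Uniqueness.} Condition (i) forces $\nabla^E_\zeta\xi_0=0$ for every $\zeta$, so $\nabla^E$ in $\iota(Q)$-directions is determined. Condition (ii) then says that $\nabla^{\iota(Q)}_\zeta\iota(\mathcal L(\xi_0,\eta))=\iota(\mathcal L(\xi_0,\nabla^V_\zeta\eta))$, which is the second line of \eqref{homog2-props}. Hence everything reduces to $\nabla^V_\zeta\tilde\eta$. The condition $\tau(H,\iota(Q))\subset H$ carries no new information here, since $\nabla$ preserves $\iota(Q)$ and $q([\tilde\eta,\zeta])$ is handled in Proposition \ref{prop:homog1}.
\begin{itemize}
\item The $V$-component of $\tau(\tilde\eta,\zeta)$ is $-\nabla^V_\zeta\tilde\eta+\Pi_V([\zeta,\tilde\eta])$ up to terms already known. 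Indeed, $\nabla_{\tilde\eta}\zeta\in\iota(Q)$ has no $V$-part, and the bracket contributes through $\Pi_V$.
\item Hence $\nabla^V_\zeta\tilde\eta=\Pi_V([\zeta,\tilde\eta])-\tau_V(\tilde\eta,\zeta)$.
\item By Remark \ref{rem:norm}(1), condition \eqref{eq:norm} says that $\tau_V(\tilde\eta,\zeta)=-\phi(\tilde\eta)\eta$ for the appropriate sign conventions, where $\phi$ is read off from the $E$-component of $\tau(\xi_0,\zeta')$.
\end{itemize}

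Next I compute that $E$-component. Since $\nabla\xi_0=0$ and $\nabla_{\xi_0}\zeta'\in\iota(Q)$, we have $\Pi_E\tau(\xi_0,\zeta')=-\alpha([\xi_0,\zeta'])\xi_0$. Substituting $\zeta'=[\xi_0,\tilde\eta]-\nabla^V_{\xi_0}\tilde\eta$, as in the proof of Proposition \ref{prop:homog1}, and using the third line of \eqref{homog1-props}, this becomes $-\alpha([\xi_0,[\xi_0,\tilde\eta]])\xi_0$, because $\alpha([\xi_0,V])=0$. Thus \eqref{eq:norm} pins down $\tau_V(\tilde\eta,\zeta)$ as $\mp\tfrac12\alpha([\xi_0,[\xi_0,\tilde\eta]])\eta$, which yields the first line of \eqref{homog2-props}. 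The main bookkeeping risk in this part is getting the signs and the factor $2$ consistent with Remark \ref{rem:norm}.

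\textbf{Existence.} I take \eqref{homog2-props} as definitions and check the following.
\begin{itemize}
\item \emph{Tensoriality in $\zeta$.} Replacing $\eta$ by $f\eta$, i.e.\ $\zeta$ by $f\zeta$, gives $[f\zeta,\tilde\eta]=f[\zeta,\tilde\eta]-\tilde\eta(f)\zeta$. Since $\Pi_V(\zeta)=0$, the extra term is killed.
\item \emph{Leibniz rule in $\tilde\eta$.} Replacing $\tilde\eta$ by $f\tilde\eta$, the bracket term produces $\zeta(f)\tilde\eta$. The correction term becomes $\alpha([\xi_0,[\xi_0,f\tilde\eta]])$, which is $f\alpha([\xi_0,[\xi_0,\tilde\eta]])$ plus terms $2\xi_0(f)\alpha([\xi_0,\tilde\eta])+\xi_0^2(f)\alpha(\tilde\eta)=0$.
\item \emph{The second line.} It defines $\nabla^{\iota(Q)}_\zeta$ on the spanning sections $\iota(\mathcal L(\xi_0,\eta))$, and consistency with the Leibniz rule follows from that of $\nabla^V$.
\end{itemize}
The combined connection then satisfies (i) and (ii) by construction. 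The torsion conditions come from the computations above read backwards, and Remark \ref{rem:norm}(1) turns \eqref{eq:norm} into the trace statement I verify.

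The step I expect to be the main obstacle is the $V$-component of $\tau(\tilde\eta,\zeta)$, in particular the precise sign of the term that \eqref{eq:norm} forces. The check that $\tau(\iota(Q),H)$ has no $\iota(Q)$-component beyond what was already arranged in Proposition \ref{prop:homog1} is routine. I would settle the signs by expanding \eqref{eq:norm} directly with $\eta_1=\tilde\eta$, $\eta_2=\eta$ and applying the injective map $\mathcal L(\xi_0,\_)$.
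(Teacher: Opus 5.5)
Your proposal is correct and follows essentially the same route as the paper. You check tensoriality in $\zeta$ and the Leibniz rule in $\tilde\eta$, adding the detail that the correction term is tensorial because $\alpha([\xi_0,\Gamma(V)])=0$ and $\alpha|_V=0$; you observe that only \eqref{eq:norm} involves derivatives in $\iota(Q)$-directions; and you expand both sides of \eqref{eq:norm}, replacing $\zeta'$ by $[\xi_0,\tilde\eta]$ via the third line of \eqref{homog1-props}.

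The sign you left open comes out as $\tau_V(\tilde\eta,\zeta)=-\tfrac12\alpha([\xi_0,[\xi_0,\tilde\eta]])\eta$. This is $+\phi(\tilde\eta)\eta$ in the convention of Remark \ref{rem:norm}, not $-\phi(\tilde\eta)\eta$, and it gives exactly the $+\tfrac12$ in the first line of \eqref{homog2-props}.
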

\begin{proof}
  For a smooth function $f$ on $M$, we get $f\zeta=\iota(\mathcal L(\xi_0,f\eta))$ and together with $\Pi_V(\zeta)=0$, this readily implies that the right hand side of the first line of \eqref{homog2-props} is linear over smooth functions in $\zeta$. The properties of the Lie bracket also imply that it satisfies the Leibniz rule in the other variable, so it uniquely specifies an extension of the partial connection $\nabla^V$ to a linear connection. The second line of \eqref{homog2-props} then defines an extension of $\nabla^{\iota(Q)}$, while $\nabla^E\xi_0=0$ trivially extends $\nabla^E$. Thus it remains to verify the claims about the conditions from Theorem \ref{thm:main} and we only have to consider property \eqref{eq:norm}. 

So let us take $\eta_1,\eta_2\in\Gamma(V)$ and put $\zeta_i=\iota(\mathcal L(\xi_0,\eta_i))$ for $i=1,2$. Knowing that the subbundles $E$ and $\iota(Q)$ are parallel for $\nabla$, the left hand side of \eqref{eq:norm} by definition is given by $-\mathcal L(\Pi_E([\xi_0,\zeta_1]),\eta_2)=-\alpha([\xi_0,\zeta_1])\zeta_2$. Likewise, we can expand the right hand side to see that \eqref{eq:norm} is equivalent to  
$$
\alpha([\xi_0,\zeta_1])\mathcal L(\xi_0,\eta_2)=2\mathcal L(\xi_0,\nabla^V_{\zeta_2}\eta_1+\Pi_V([\eta_1,\zeta_2])).   
$$
  Now we observe that $\zeta_1-[\xi_0,\eta_1]\in\Gamma(V)$, whence by the third line in \eqref{homog1-props} we can replace $\zeta_1$ by $[\xi_0,\eta_1]$ in the left hand side, so this is satisfied for the extension defined by the first line in \eqref{homog2-props}. Since all other ingredients are known, this obviously in turn uniquely determines $\nabla^V_{\zeta_2}\eta_1$. 
\end{proof}

\begin{remark}\label{rem:non-exact}
As briefly mentioned above, there is a slightly larger family of distinguished connections. This more general version starts from an arbitrary linear connection $\nabla^E$ on the bundle $E$ and then constructs $\Pi$ and connections $\nabla^V$ on $V$ and $\nabla^{\iota(Q)}$ on $\iota(Q)$ to obtain a linear connection $\nabla$ on $TM$. Apart from the obvious change that $\nabla$ should restrict to the given connection $\nabla^E$ on $E$, the only part in Theorem \ref{thm:main} that has to be modified is the condition in formula \eqref{eq:norm}. Here one has to involve the curvature $R^E\in\Omega^2(M,L(E,E))$ of $\nabla^E$ and require that for $\xi\in\Gamma(E)$ and $\eta_1,\eta_2\in\Gamma(V)$ one has
$$
\mathcal L\Big(R^E(\xi,\eta_1)\xi+
\tau(\xi,\iota(\mathcal L(\xi,\eta_1))),\eta_2\Big)=2\,\mathcal L\Big(\xi,\tau(\eta_1,\iota(\mathcal L(\xi,\eta_2)))\Big).
$$
The proof of this more general version is completely parallel to what we have done above. 
\end{remark}

Before we move to the study of special Weyl structures, we can prove several additional properties of the torsion and curvature of the connections obtained in Theorem \ref{thm:main}. 
\begin{theorem}\label{tors-curv}
  Let $(M,E,V)$ be a path geometry, $\xi_0\in\Gamma(E)$ a nowhere vanishing section, $(\nabla,\Pi)$ the data associated to $\xi_0$ in Theorem \ref{thm:main} and let $\tau$ and $R$ be the torsion and the curvature of $\nabla$. Then we have $\tau(V,V)=0$ and $\tau(TM,\iota(Q))\subset H$. Moreover, denoting by $\alpha\in\Omega^1(M)$ the form charaterized by $\Pi_E=\alpha\xi_0$, we get for $\eta_1,\eta_2,\eta_3\in\Gamma(V)$ and $\zeta_i=\iota(\mathcal L(\xi_0,\eta_i))$ for $i=1,2,3$ the following:
\begin{equation}\label{eq:tors-curv}
\begin{aligned}
\begin{cases}
\tau(\eta_1,\zeta_2)&=-\tfrac12\alpha([\xi_0,\zeta_1])\eta_2-\alpha([\eta_1,\zeta_2])\xi_0\\ 
R(\xi_0,\eta_1)(\eta_2)&=-\tfrac12\alpha([\xi_0,\zeta_1])\,\eta_2-\alpha([\xi_0,\zeta_2])\,\eta_1\\
R(\eta_1,\eta_2)(\eta_3)&=-\alpha([\eta_1,\zeta_3])\,\eta_2+\alpha([\eta_2,\zeta_3])\,\eta_1.
\end{cases}
\end{aligned}
\end{equation}  
\end{theorem}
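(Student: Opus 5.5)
The plan is to compute every quantity directly from the explicit formulas in Propositions \ref{prop:homog1} and \ref{prop:homog2}. Throughout I use two facts. First, for $\eta\in\Gamma(V)$, $\zeta=\iota(\mathcal L(\xi_0,\eta))=[\xi_0,\eta]-\nabla^V_{\xi_0}\eta$. Second, $\alpha$ vanishes on $V$ and on $\iota(Q)$, so $\alpha(\zeta)=\alpha([\xi_0,\eta])$ for any expression differing from $[\xi_0,\eta]$ by a section of $V$. This second fact is already used in the proof of Proposition \ref{prop:homog2}.

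\emph{Torsion statements.} For $\tau(V,V)=0$: $V$ is parallel and involutive, so $\tau(\eta_1,\eta_2)=\nabla^V_{\eta_1}\eta_2-\nabla^V_{\eta_2}\eta_1-[\eta_1,\eta_2]\in\Gamma(V)$. Applying $\mathcal L(\xi_0,\_)$ and the fourth line of \eqref{homog1-props}, it becomes
$$q([\eta_1,[\xi_0,\eta_2]]-[\eta_2,[\xi_0,\eta_1]]-[\xi_0,[\eta_1,\eta_2]])$$
(note that $\mathcal L(\xi_0,[\eta_1,\eta_2])=q([\xi_0,[\eta_1,\eta_2]])$). This vanishes by the Jacobi identity.

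For $\tau(TM,\iota(Q))\subset H$: Theorem \ref{thm:main} already gives $\tau(H,\iota(Q))\subset H$, so it remains to show $q(\tau(\zeta_1,\zeta_2))=0$. Since $\iota(Q)$ is parallel, this reduces to
$$q([\zeta_1,\zeta_2])=\mathcal L(\xi_0,\nabla^V_{\zeta_1}\eta_2-\nabla^V_{\zeta_2}\eta_1).$$
I would prove this by expanding $\zeta_i=[\xi_0,\eta_i]-\nabla^V_{\xi_0}\eta_i$ and using the first line of \eqref{homog2-props}. The $\alpha$-terms there contribute $\tfrac12\alpha([\xi_0,\zeta_2])\eta_1-\tfrac12\alpha([\xi_0,\zeta_1])\eta_2$, and these must match the brackets after Jacobi. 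Concretely, $q([[\xi_0,\eta_1],[\xi_0,\eta_2]])$ should be rewritten via Jacobi as $q([\xi_0,[\eta_1,[\xi_0,\eta_2]]])-q([\eta_1,[\xi_0,[\xi_0,\eta_2]]])$. Then $[\xi_0,[\xi_0,\eta_2]]$ is decomposed into $H$- and $\iota(Q)$-parts, and its $E$-part is exactly where $\alpha([\xi_0,[\xi_0,\eta_2]])$ enters. I expect this step to be the main computational obstacle, and I would organize it by decomposing every bracket by $\Pi_E,\Pi_V$ and $q$.

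\emph{Formula for $\tau(\eta_1,\zeta_2)$.} The $V$-component is forced by \eqref{eq:norm} together with Remark \ref{rem:norm}(1): the $E$-component of $\tau(\xi_0,\zeta_1)$ is $-\alpha([\xi_0,\zeta_1])\xi_0$, which yields $-\tfrac12\alpha([\xi_0,\zeta_1])\eta_2$. The $E$-component is $-\Pi_E([\eta_1,\zeta_2])=-\alpha([\eta_1,\zeta_2])\xi_0$, because $E$ is parallel and $\eta_1,\zeta_2$ have no $E$-part.

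\emph{Curvature formulas.} I would use the Bianchi identity rather than direct differentiation. For a linear connection with torsion,
$$\mathfrak S\,R(\psi_1,\psi_2)\psi_3=\mathfrak S\big((\nabla_{\psi_1}\tau)(\psi_2,\psi_3)+\tau(\tau(\psi_1,\psi_2),\psi_3)\big),$$
where $\mathfrak S$ is the cyclic sum. Since $V$ is parallel, $R(\eta_1,\eta_2)\eta_3\in V$, so it suffices to apply $\mathcal L(\xi_0,\_)$, which is parallel in the appropriate sense by (ii). This gives $\mathcal L(\xi_0,R(\eta_1,\eta_2)\eta_3)=R(\eta_1,\eta_2)\zeta_3$ (the curvature on $\iota(Q)$), using $\nabla\xi_0=0$ and $R\xi_0=0$.

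Then the $\iota(Q)$-valued curvature is computed via Bianchi on the triple $(\eta_1,\eta_2,\zeta_3)$, projecting to $H$. The terms $\nabla\tau$ vanish or reduce using $\tau(V,V)=0$ and the formula for $\tau(\eta,\zeta)$. The required component is $V$-valued, and the cyclic sum contains $R(\eta_2,\zeta_3)\eta_1$-type terms in $V$ as well. So I would instead use Bianchi with $(\xi_0,\eta_1,\eta_2)$ for the second formula and $(\eta_1,\eta_2,\eta_3)$ mapped through $\mathcal L$ for the third.

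If Bianchi bookkeeping becomes messy, the fallback is direct computation. One computes $\mathcal L(\xi_0,R(\xi_0,\eta_1)\eta_2)$ by writing out $\nabla^V$ via $q([\cdot,[\xi_0,\cdot]])$ and applying Jacobi repeatedly. The $\alpha$-terms arise each time a bracket $[\xi_0,[\xi_0,\eta]]$ or $[\eta_1,[\xi_0,\eta]]$ is split using $\Pi$ and the relation $q([\xi,\zeta])=\mathcal L(\xi_0,\nabla_\xi\eta)$.
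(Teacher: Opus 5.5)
Your arguments for $\tau(V,V)=0$ (the same Jacobi argument as in the paper) and for the first line of \eqref{eq:tors-curv} are fine. Reading off the $V$-component from \eqref{eq:norm} is a slicker variant of the paper's use of the first line of \eqref{homog2-props}. The curvature part, however, fails as you finally set it up. Since $\nabla\xi_0=0$ kills $R(\eta_1,\eta_2)\xi_0$, the Bianchi identity for $(\xi_0,\eta_1,\eta_2)$ only gives $R(\xi_0,\eta_1)\eta_2-R(\xi_0,\eta_2)\eta_1=\tau(\eta_2,\zeta_1)-\tau(\eta_1,\zeta_2)$, i.e.\ the part skew in $\eta_1,\eta_2$. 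The symmetric part $-\tfrac34\big(\alpha([\xi_0,\zeta_1])\eta_2+\alpha([\xi_0,\zeta_2])\eta_1\big)$ of the second line is invisible to it. Likewise, since $\tau(V,V)=0$, the identity for $(\eta_1,\eta_2,\eta_3)$ (also after applying $\mathcal L(\xi_0,\_)$) only says $\mathfrak S\,R(\eta_1,\eta_2)\eta_3=0$, which does not determine $R(\eta_1,\eta_2)\eta_3$.

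Your fallback ``direct computation'' is only a description. The paper's proof needs concrete devices: a frame with $\nabla_{\xi_0}\eta_i=0$, so that $\zeta_i=[\xi_0,\eta_i]$ and $-[\xi_0,\zeta_i]=\tau(\xi_0,\zeta_i)\in\Gamma(H)$, and a Jacobi rearrangement in which the terms $q([\zeta_1,\zeta_2])$ cancel. The step $q(\tau(\zeta_1,\zeta_2))=0$, which the paper does not write out, is also left open by you. Moreover, the single Jacobi expansion you outline does not close on its own; carried out, it yields
\[
q(\tau(\zeta_1,\zeta_2))=-\mathcal L\big(\xi_0,X(\eta_1,\eta_2)\big),\qquad X(\eta_1,\eta_2):=R(\xi_0,\eta_1)\eta_2+\tfrac12\alpha([\xi_0,\zeta_1])\eta_2+\alpha([\xi_0,\zeta_2])\eta_1 .
\]
So this claim is equivalent to the second line of \eqref{eq:tors-curv} and needs the same missing input.

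Your Bianchi idea does work if you take $\iota(Q)$-components of triples containing one $\zeta$. You considered $(\eta_1,\eta_2,\zeta_3)$ but projected to $H$, which is why the $V$-valued terms got in the way. Projecting to $\iota(Q)$ instead, the terms $R(\eta_2,\zeta_3)\eta_1$, $R(\zeta_3,\eta_1)\eta_2$ and all $\nabla\tau$-terms drop out, being $H$-valued or zero, and $\tau(\tau(\eta_1,\eta_2),\zeta_3)=0$. The $E$-components from the first line then turn the two remaining torsion terms into $R(\eta_1,\eta_2)\zeta_3=\alpha([\eta_2,\zeta_3])\zeta_1-\alpha([\eta_1,\zeta_3])\zeta_2$. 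Since $R(\eta_1,\eta_2)\zeta_3=\iota(\mathcal L(\xi_0,R(\eta_1,\eta_2)\eta_3))$, this is the third line.

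For the second line, the $\iota(Q)$-component of the Bianchi identity for $(\xi_0,\eta_1,\zeta_2)$ gives exactly the displayed relation above. Its left side is skew in $\eta_1,\eta_2$, so $X$ is skew. On the other hand, the $(\xi_0,\eta_1,\eta_2)$ identity combined with the first line shows that the skew part of $X$ vanishes. Hence $X=0$, which proves the second line and $\tau(\iota(Q),\iota(Q))\subset H$ at once. This route differs from the paper's direct frame computations and is arguably shorter. Without this combination, however, your proposal leaves both curvature formulas and the $\iota(Q)\times\iota(Q)$ torsion claim unproven.
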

\begin{remark}\label{rem:curvature}
   We have already observed in Remark \ref{rem:norm} that the component $V\times\iota(Q)\to V$ of $\tau$ is ``pure trace''. Similarly, the other two equations in \eqref{eq:tors-curv} in particular imply that several components of the curvatures vanish identically. In particular, fixing $\xi_0$, the second line defines a section of $V^*\otimes V^*\otimes V$ and the contraction into the first $V^*$-entry is given by $\eta_2\mapsto -\frac{2n+1}{2}\alpha([\xi_0,\zeta_2])$, so we exactly get an inclusion of a trace component into the curvature. Similarly, the Ricci-type contraction in the third line maps $(\eta_2,\eta_3)$ to $(n-1)\alpha([\eta_2,\zeta_3])$, so again we get an inclusion of a trace component. Note also that $\alpha(\xi_0)$ is constant while $\alpha(\eta_i)=\alpha(\zeta_i)=0$, so the right hand sides of $\eqref{eq:tors-curv}$ can all be expressed in terms of $d\alpha$. 
\end{remark}
\begin{proof}[Proof of Theorem \ref{tors-curv}]
    By the Jacobi identity $0=[\eta_1,[\xi_0,\eta_2]]-[\eta_2,[\xi_0,\eta_1]]-[\xi_0,[\eta_1,\eta_2]]$. By involutivity, we get $[\eta_1,\eta_2]\in\Gamma(V)$ so applying $q$ to this equation and using the fourth line of \eqref{homog1-props}, we get $\mathcal L(\xi_0,\tau(\eta_1,\eta_2))=0$. Since $\tau(V,V)\subset V$ by definition, this implies $\tau(V,V)=0$. 
    
    Since we know that $\tau(\eta_1,\zeta_2)=\nabla_{\eta_1}\zeta_2-\nabla_{\zeta_2}\eta_1-[\eta_1,\zeta_2]$ lies in $\Gamma(H)$, it has to be given by $-\nabla_{\zeta_2}\eta_1-\Pi_V([\eta_1,\zeta_2])-\Pi_E([\eta_1,\zeta_2])$. By the first line of \eqref{homog2-props}, the first two summands give $-\tfrac12\alpha([\xi_0,[\xi_0,\eta_1]])\eta_2$. As we have observed already we can replace $[\xi_0,\eta_1]$ with $\zeta_1$ without changing the result, so the first line of \eqref{eq:tors-curv} follows readily. 

    The fourth line of \eqref{homog1-props} shows that  
    $\mathcal L(\xi_0,\nabla_{\eta_1}\nabla_{\eta_2}\eta_3)=q([\eta_1,[\xi_0,\nabla_{\eta_2}\eta_3]])$.
    It also shows that $q([\xi_0,\nabla_{\eta_2}\eta_3])=q([\eta_2,[\xi_0,\eta_3]])$, so $[\xi_0,\nabla_{\eta_2}\eta_3]-[\eta_2,[\xi_0,\eta_3]]+\Pi_E([\eta_2,[\xi_0,\eta_3]])\in\Gamma(V)$ by the third line of \eqref{homog1-props}. By involutivity of $V$, we conclude that
    $$
    \mathcal L(\xi_0,\nabla_{\eta_1}\nabla_{\eta_2}\eta_3)=q([\eta_1,[\eta_2,[\xi_0,\eta_3]]]) + \alpha([\eta_2,[\xi_0,\eta_3]])\mathcal L(\xi_0,\eta_1). 
    $$
    Subtracting the same term with $\eta_1$ and $\eta_2$ exchanged as well as $\mathcal L(\xi_0,\nabla_{[\eta_1,\eta_2]}\eta_3)=q([[\eta_1,\eta_2],[\xi_0,\eta_3]])$, the first terms in the right hand side cancel. Since $R(\eta_1,\eta_2)(\eta_3)$ is a section of $V$, this implies the last line of \eqref{eq:tors-curv}. 

    To deal with the second line of \eqref{eq:tors-curv}, we first observe that both sides of the claimed identity are linear over smooth functions in $\eta_1$ and $\eta_2$, so it suffices to verify them locally for the elements of a frame. Locally around each point, we can fix a hypersurface $\tilde M\subset M$ whose tangent spaces are transversal to $E$, then choose a local frame of $V$ along $\tilde M$ and extend it by parallel transport along $\xi_0$ to a local frame defined on an open subset. The frame elements then are parallel in $E$-directions, so we can prove our identity assuming that $\nabla_{\xi_0}\eta_i=0$ for $i=1,2$. This also implies that $\Pi_V([\xi_0,\eta_i])=0$ for $i=1,2$ and hence $\zeta_i=[\xi_0,\eta_i]$ and $\nabla_{\xi_0}\zeta_i=0$ for $i=1,2$. Hence the second line of \eqref{homog2-props} shows that 
    \begin{equation}\label{tech1}
    -\mathcal L(\xi_0,\nabla_{[\xi_0,\eta_1]}\eta_2)=-\mathcal L(\xi_0,\nabla_{\zeta_1}\eta_2)=-q([\xi_0,\Pi_V([\zeta_1,\eta_2]))-\tfrac12\alpha([\xi_0,[\xi_0,\eta_2]])\mathcal L(\xi_0,\eta_1). 
    \end{equation}
    Writing $\Pi_{-2}:=\id-\Pi$, we can write $\Pi_V([\zeta_1,\eta_2])=[\zeta_1,\eta_2]-\Pi_E([\zeta_1,\eta_2])-\Pi_{-2}([\zeta_1,\eta_2])$, and plugging into $q([\xi_0,\_])$ we can ignore the $\Pi_E$-term. Since $q(\tau(V,\iota(Q)))=0$, we get $\Pi_{-2}([\zeta_1,\eta_2])=-\nabla_{\eta_2}\zeta_1$ and since $q(\tau(E,\iota(Q)))=0$, we get $q([\xi_0,\nabla_{\eta_2}\zeta_1])=q(\nabla_{\xi_0}\nabla_{\eta_2}\zeta_1)=\mathcal L(\xi_0,\nabla_{\xi_0}\nabla_{\eta_2}\eta_1)$, where in the last step we have used that $\iota\circ \mathcal L(\xi_0,\_)$ is parallel. Hence we conclude that 
    \begin{equation}\label{tech2}
      -q([\xi_0,\Pi_V([\zeta_1,\eta_2]))=-q([\xi_0,[\zeta_1,\eta_2]])-\mathcal L(\xi_0,\nabla_{\xi_0}\nabla_{\eta_2}\eta_1). 
    \end{equation}
    To compute $\mathcal L(\xi_0,R(\xi_0,\eta_1)\eta_2)$, we have to add $\mathcal L(\xi_0,\nabla_{\xi_0}\nabla_{\eta_1}\eta_2)$ to \eqref{tech1}, and in view of $\tau(V,V)=0$, this adds up with the last term of \eqref{tech2} to $\mathcal L(\xi_0,\nabla_{\xi_0}[\eta_1,\eta_2])=\tfrac12q([\xi_0,[\xi_0,[\eta_1,\eta_2]]])$. Hence 
    \begin{equation}\label{tech3}
      \mathcal L(\xi_0,R(\xi_0,\eta_1)\eta_2)=\tfrac12q([\xi_0,[\xi_0,[\eta_1,\eta_2]]])-q([\xi_0,[\zeta_1,\eta_2]]) -\tfrac12\alpha([\xi_0,[\xi_0,\eta_2]])\mathcal L(\xi_0,\eta_1).
    \end{equation}
    The Jacobi identity and $\zeta_i=[\xi_0,\eta_i]$ readily show that the first two terms in the right hand side add up to $-\frac12 q([\xi_0,[\zeta_1,\eta_2]])-\frac12q([\xi_0,[\zeta_2,\eta_1]])=-\frac12 q([[\xi_0,\zeta_1],\eta_2])-\frac12q([[\xi_0,\zeta_2],\eta_1])$. Now in the defining equation for $\tau(\xi_0,\zeta_i)$ the derivative terms both vanish, so $-[\xi_0,\zeta_i]=\tau(\xi_0,\zeta_i)\in\Gamma(H)$. Forming the bracket with $\eta_j$ and applying $q$, only the $E$-component matters and this is given by $\alpha([\xi_0,\zeta_i])\xi_0$.  
\end{proof}    

\section{Distinguished scales}\label{distinguished}
Our second aim is to single out a distinguished subclass of the Weyl structures constructed in Theorem \ref{thm:main}. The deeper reason for the existence of this subclass is that for a path geometry $(M,E,V)$ there is a natural (linear) differential operator $D$ acting on sections of the dual bundle $E^*$ to $E$. Since nowhere vanishing sections of $E$ are equivalent to nowhere vanishing sections of $E^*$, such a subclass is given by sections for which the dual lies in the kernel of $D$. This situation also occurs for other types of parabolic geometries, in particular for conformal and projective structures, but with a completely different flavor. In these cases, the relevant operator $D$ is the first operator in a BGG sequence (a "first BGG operator") and hence is overdetermined, so existence of non-trivial sections in the kernel is a restriction on the geometry. Indeed, sections in the kernel of the relevant operator correspond to Einstein metrics in a conformal class and to Ricci-flat connections in a projective class, respectively, and hence do not exist in general. 

For path geometries, the relevant operator is the second operator in the BGG sequence that refines the (scalar) de Rham complex (which is not available for conformal and projective structures). As we shall see below, involutivity of $V$ implies that locally there always are nowhere-vanishing sections in the kernel of the relevant operator, so the resulting distinguished Weyl structures do always exist locally. While the proof of existence of such sections turns out to be quite simple, the result is rather surprising from the point of view of BGG sequences and to our knowledge does not follow from general BGG-theory. 

\subsection{The scalar BGG sequence}\label{BGG}
We first discuss the relevant refinement of the de Rham sequence (in low degrees) in an elementary way. For a path geometry $(M,E,V)$, the subbundle $H=E\oplus V\subset TM$ gives rise to nested subbundles in the bundles of differential forms. In degree one, we obtain a subbundle of $T^*M$, whose fiber in a point $x$ consists of those linear maps $T_xM\to\mathbb R$ that vanish on $H_x\subset T_xM$. This can be identified with the dual bundle $Q^*$ to $Q=TM/H$ and the quotient by this subbundle is isomorphic to $H^*=E^*\oplus V^*$. Hence we get a short exact sequence of the form $0\to Q^*\to T^*M\to E^*\oplus V^*\to 0$.  

In higher degrees, the subbundles are similarly defined as maps that vanish if sufficiently many of their entries are in $H$. In particular, one always obtains $\Lambda^k H^*$ as a quotient of $\Lambda^kT^*M$ by the subbundle consisting of multilinear maps that vanish if one of their entries lies in $H$. The next smaller subbundle is obtained from maps that vanish if two of their entries lie in $H$ and so on. If $k\leq n$, then the smallest non-zero subbundle is obtained from maps that vanish if all their entries are from $H$ and this can be identified with $\Lambda^kQ^*$. 

The general theory of BGG sequences is based on maps $\partial^*:\Lambda^kT^*M\to\Lambda^{k-1}T^*M$ such that $\partial^*\circ\partial^*=0$, which in turn can be obtained from the fact that $T^*M$ can be naturally made into a bundle of nilpotent Lie algebras. Since we are mainly interested in degree one, we don't go into this but derive the necessary facts directly and only in the degrees in which we need them. 

\begin{proposition}\label{prop:BGG}
    Let $(M,E,V)$ be a path geometry and let $\alpha_0\in\Gamma(H^*)$ be a smooth section.

    (1) There is a unique $\alpha\in\Omega^1(M)$ such that $\alpha|_H=\alpha_0$ and such that $d\alpha|_{E\times V}=0$. Mapping $\alpha_0$ to $\alpha$ defines a linear first order differential operator $L:\Gamma(H^*)\to\Omega^1(M)$. 

    (2) Mapping $\alpha_0$ to $dL(\alpha_0)|_{V\times V}$ defines a linear first order operator $\Gamma(H^*)\to\Gamma(\Lambda^2V^*)$, which vanishes if $\alpha_0\in\Gamma(E^*)\subset\Gamma(H^*)$, i.e.\ $\alpha_0|_V=0$. 

    (3) For $\alpha_0\in\Gamma(E^*)$, the form $dL(\alpha_0)$ vanishes upon insertion of two sections of $H$ and hence determines a well defined section $D(\alpha_0)$ of $H^*\otimes Q^*\cong E^*\otimes Q^*\oplus V^*\otimes Q^*$. Viewing $Q^*\otimes V^*$ as $E^*\otimes V^*\otimes V^*$, $D(\alpha_0)$ actually has values in the subbundle $E^*\otimes S^2V^*$. 

    (4) If $\alpha_0\in\Gamma(E^*)$ satisfies $D(\alpha_0)=0$, then $dL(\alpha_0)=0$.  
\end{proposition}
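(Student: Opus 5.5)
Throughout, I work with local frames and the formula $d\alpha(\psi_1,\psi_2)=\psi_1\cdot\alpha(\psi_2)-\psi_2\cdot\alpha(\psi_1)-\alpha([\psi_1,\psi_2])$.

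For (1), I fix any extension $\tilde\alpha\in\Omega^1(M)$ of $\alpha_0$. Every other extension has the form $\tilde\alpha+\beta$ with $\beta\in\Gamma(Q^*)$, i.e.\ $\beta|_H=0$. For $\xi\in\Gamma(E)$ and $\eta\in\Gamma(V)$ this gives $d\beta(\xi,\eta)=-\beta([\xi,\eta])=-\beta(\mathcal L(\xi,\eta))$. Since $\mathcal L:E\otimes V\to Q$ is an isomorphism, the map $\beta\mapsto d\beta|_{E\times V}\in\Gamma(E^*\otimes V^*)$ is a tensorial isomorphism $Q^*\to E^*\otimes V^*$. Hence there is a unique $\beta$ with $d\beta|_{E\times V}=-d\tilde\alpha|_{E\times V}$. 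This yields existence and uniqueness, and the resulting $\alpha$ depends on one derivative of $\alpha_0$. Concretely, in terms of any splitting, $L(\alpha_0)(\zeta)=\alpha_0(\Pi\zeta)+\beta(\zeta)$, which makes linearity and first order evident.

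For (2), $d$ of a first order expression is a priori second order, so I must show that $dL(\alpha_0)|_{V\times V}$ involves only first derivatives. For $\eta_1,\eta_2\in\Gamma(V)$, involutivity of $V$ gives $[\eta_1,\eta_2]\in\Gamma(V)$. Therefore $dL(\alpha_0)(\eta_1,\eta_2)=\eta_1\cdot\alpha_0(\eta_2)-\eta_2\cdot\alpha_0(\eta_1)-\alpha_0([\eta_1,\eta_2])$ only involves $\alpha_0|_V$ and its first derivatives. It is thus first order, and it vanishes if $\alpha_0|_V=0$.

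For (3), I take $\alpha_0\in\Gamma(E^*)$ and $\alpha=L(\alpha_0)$. Then $d\alpha$ vanishes on $E\times V$ by construction, on $V\times V$ by (2), and on $E\times E$ because $E$ has rank one. So $d\alpha$ descends to a section of $H^*\otimes Q^*$ via $(h,\zeta)\mapsto d\alpha(h,\zeta)$. For the symmetry claim, I write $Q^*\cong E^*\otimes V^*$ via $\mathcal L$, so the $V^*\otimes Q^*$ part becomes $(\eta_1,\xi,\eta_2)\mapsto d\alpha(\eta_1,[\xi,\eta_2])$. I then apply the identity $d(d\alpha)=0$ to $(\xi,\eta_1,\eta_2)$. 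Expanding, the terms $d\alpha(\xi,\eta_i)$ and $d\alpha(\eta_1,\eta_2)$ vanish identically, as do their derivatives. Also $d\alpha(\xi,[\eta_1,\eta_2])=0$ since $[\eta_1,\eta_2]\in\Gamma(V)$. What remains is
\[
0=-d\alpha([\xi,\eta_1],\eta_2)+d\alpha([\xi,\eta_2],\eta_1),
\]
which is exactly symmetry in $\eta_1,\eta_2$. This cyclic-sum computation is the main point and needs only care with signs.

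For (4), $D(\alpha_0)=0$ means $d\alpha(H,TM)=0$, so the only possibly nonzero part of $d\alpha$ is the component in $\Lambda^2Q^*$. To show it vanishes, I evaluate $dd\alpha=0$ on $(\xi,\eta,\zeta)$ with $\xi\in\Gamma(E)$, $\eta\in\Gamma(V)$ and $\zeta$ arbitrary. Every term in which $d\alpha$ has an entry in $H$ drops out. What survives is $-d\alpha([\xi,\eta],\zeta)$ plus brackets of the form $[\eta,\zeta]$ and $[\zeta,\xi]$ paired with the $H$-entries $\xi$ and $\eta$, and these also vanish. So $d\alpha(\mathcal L(\xi,\eta),\zeta)=0$. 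Since elements $\mathcal L(\xi,\eta)$ span $Q$, this gives $d\alpha=0$. The main obstacle is bookkeeping which terms survive in this expansion, but each surviving term has an $H$-entry except the one displayed.
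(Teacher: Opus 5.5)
Your proposal is correct and follows the paper's proof in all four parts. You correct an arbitrary extension by the unique $\beta\in\Gamma(Q^*)$ in (1), use involutivity of $V$ in (2), and expand $d(d\alpha)=0$ on $(\xi,\eta_1,\eta_2)$ in (3) and on $(\xi,\eta,\zeta)$ in (4). The only difference is cosmetic: in (4) you allow an arbitrary third entry $\zeta$ where the paper takes $\zeta=[\xi,\eta_2]$, which changes nothing.
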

\begin{proof}
    (1) Suppose that $\beta\in\Omega^1(M)$ satisfies $\beta|_H=0$. Then by definition for $\xi\in\Gamma(E)$ and $\eta\in\Gamma(V)$, we get $d\beta(\xi,\eta)=-\beta([\xi,\eta])$. Viewing $\beta$ as a section of $Q^*$, this means that $d\beta(\xi,\eta)=-\beta(\mathcal L(\xi,\eta))$, so any section of $E^*\otimes V^*$ can be uniquely written as $d\beta$ for such a section $\beta$. Given $\alpha_0\in\Gamma(H^*)$, we can choose any $\hat\alpha\in\Omega^1(M)$ such that $\hat\alpha|_H=\alpha_0$ (for example using an appropriate local coframe) and then put $L(\alpha_0):=\hat\alpha+\beta$, where $\beta\in\Gamma(Q^*)\subset\Omega^1(M)$ is characterized by $d\beta|_{E\times V}=-d\hat\alpha|_{E\times V}$. 

    (2) It is clear that $\alpha_0\mapsto dL(\alpha_0)|_{V\times V}$ defines a differential operator of order at most two. Putting $\alpha:=L(\alpha_0)$, involutivity of $V$ and the global formula for the exterior derivative readily imply that $d\alpha|_{V\times V}$ depends only on $\alpha|_V=\alpha_0|_V$. This shows that the operator is of first order and that it vanishes if $\alpha_0|_V=0$. 

    (3) For $\alpha=L(\alpha_0)$, $d\alpha$ vanishes on $E\times V$ by (1) and on $V\times V$ by (2). Thus it vanishes upon insertion of two sections of $H$ and hence its restriction to $H\times TM$ naturally descends to $H\times Q$, which implies existence of $D(\alpha_0)$. For the last claim, we have to show that for $\eta_1,\eta_2\in\Gamma(V)$ and $\xi\in\Gamma(E)$, we get $d\alpha(\eta_1,[\xi_0,\eta_2])=d\alpha(\eta_2,[\xi_0,\eta_1])$. This readly follows from expanding the equation $0=d(d\alpha)(\xi,\eta_1,\eta_2)$ taking into account that $d\alpha|_{H\times H}=0$ and $[\eta_1,\eta_2]\in\Gamma(H)$.  

    (4) The assumption that $D(\alpha_0)=0$ means that $\alpha:=L(\alpha_0)$ has the property that $d\alpha$ vanishes upon insertion of one section of $H$, and to prove the claim, it suffices to show that for any $\xi\in\Gamma(E)$ and $\eta_1,\eta_2\in\Gamma(H)$, we have $d\alpha([\xi,\eta_1],[\xi,\eta_2])=0$. But this readily follows from expanding $0=d(d\alpha)(\xi,\eta_1,[\xi,\eta_2])$ according to the global formula for the exterior derivative, which apart from the negative of the term in question only involves terms in which one section of $H$ is inserted into $d\alpha$. 
\end{proof}

\begin{remark}
(1)  Observe that for $\dim(M)=5$, part (2) of this proposition depends on the assumption that the subbundle $V$ is involutive, compare to Remark \ref{rem2.1}. Indeed, if one only assumes that the bracket of two sections of $V$ always is a section of $H$, then the Lie bracket induces a tensorial map $V\otimes V\to E$ which captures the obstruction to involutivity of $V$. For $\alpha_0\in\Gamma(E^*)$, $\alpha=L(\alpha_0)$ and $\eta_1,\eta_2\in\Gamma(V)$, one then gets $d\alpha(\eta_1,\eta_2)=-\alpha_0([\eta_1,\eta_2])$, so this equivalently encodes that tensorial map.  

(2) As a simple special case of the theory of BGG sequences, the considerations in part (3) and (4) of Proposition \ref{prop:BGG} can be extended to general sections of $H^*$ with a bit of additional care. The general version of the operator $D$ then has three components with values in $\Lambda^2V^*$ (this comes from part (2) of Proposition \ref{prop:BGG}), $E^*\otimes Q^*$ (which is easily seen to be well defined) and $E^*\otimes S^2V^*\subset V^*\otimes Q^*$ (which is a bit more complicated to obtain). For $f\in C^\infty(M,\mathbb R)$, one then defines $Df\in\Gamma(H^*)$ as $df|_H$ and shows that the operator $L$ induces an isomorphis between $\text{ker}(D)/\text{im}(D)$ and the first de Rham cohomology $H^1(M,\mathbb R)=\text{ker}(d)/\text{im}(d)$. 
  \end{remark}

  \subsection{Application to Weyl structures}\label{dist-Weyl}
  The relevance of the above considerations for Weyl structures is rather obvious. Given a local nowhere-vanishing section $\xi_0$ of $E$, we obtain a local nowhere-vanishing section $\alpha_0$ of $E^*$ characterized by $\alpha_0(\xi_0)\equiv 1$ and vice versa. Hence in Theorem \ref{thm:main} one could also use $\alpha_0$ as the starting point for the construction of a Weyl structure, so one is naturally led to the following definition. 

\begin{definition}
A Weyl structure $(\nabla,\Pi)$ as in Theorem \ref{thm:main} is called \textit{distinguished} if and only if the corresponding section $\alpha_0\in\Gamma(E^*)$ satisfies $D(\alpha_0)=0$ or equivalently (in view of part (4) or Proposition \ref{prop:BGG}) $d(L(\alpha_0))=0$. 
\end{definition}

Now we can prove our second main result, which states that distinguished Weyl structures always exist locally and describes their special properties. 

\begin{theorem}\label{thm:dist}
    Let $(M,E,V)$ be a path geometry for which the subbundle $V$ is involutive. 

    (1) Locally around each point $x\in M$, there exists a distinguished Weyl structure. 

    (2) Let $(\nabla,\Pi)$ be a distinguished Weyl structure and let $\tau$ and $R$ be the torsion and the curvature of $\nabla$. Then in addition to the properties listed in Theorems \ref{thm:main} and \ref{tors-curv}, then the components $\tau|_{V\times\iota(Q)}$ and $R|_{\Lambda^2H\times V}$ vanish.
\end{theorem}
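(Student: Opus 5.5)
The plan is to identify the one-form $\alpha$ attached to a scale with $L(\alpha_0)$. Then condition $d(L(\alpha_0))=0$ becomes closedness of $\alpha$, and both parts reduce to elementary consequences of that.

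\emph{Step 1: $\alpha=L(\alpha_0)$.} Let $\xi_0$ be a scale, $\alpha_0\in\Gamma(E^*)$ with $\alpha_0(\xi_0)=1$, and $\alpha$ the form with $\Pi_E=\alpha\xi_0$ from Proposition \ref{prop:homog1}. Since $\Pi$ restricts to the identity on $H$, we get $\alpha(\xi_0)=1$ and $\alpha|_V=0$, so $\alpha|_H=\alpha_0$. For $\xi=f\xi_0$ and $\eta\in\Gamma(V)$, the global formula gives
\[
d\alpha(\xi,\eta)=\xi\cdot\alpha(\eta)-\eta\cdot\alpha(\xi)-\alpha([\xi,\eta])=-\eta\cdot f-\alpha([\xi,\eta]).
\]
The third line of \eqref{homog1-props} says $\alpha([\xi,\eta])=-df(\eta)$, so $d\alpha|_{E\times V}=0$. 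By the uniqueness in part (1) of Proposition \ref{prop:BGG}, $\alpha=L(\alpha_0)$. Hence $(\nabla,\Pi)$ is distinguished if and only if $d\alpha=0$.

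\emph{Part (1).} Conversely, let $\alpha$ be a closed one-form defined near $x$ with $\alpha|_V=0$ and $\alpha|_E$ nowhere vanishing. Then $d\alpha|_{E\times V}=0$, so $\alpha=L(\alpha|_H)$. The scale $\xi_0$ with $\alpha(\xi_0)=1$ therefore gives a distinguished Weyl structure. To produce such an $\alpha$, I use involutivity of $V$:
\begin{itemize}
\item take a local leaf space $\psi:U\to N$ with $\ker(T\psi)=V$, as in \S\ref{2.1};
\item since $T_x\psi(E_x)$ is a line in $T_{\psi(x)}N$, choose a function $g$ on $N$ with $dg$ nonzero on that line;
\item put $\alpha:=d(g\circ\psi)$ on a neighborhood of $x$.
\end{itemize}
This form is exact, kills $V$, and is nonzero on $E$ near $x$. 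In other words, distinguished scales correspond to local functions that are constant along the leaves of $V$ and strictly monotone along the paths.

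\emph{Part (2).} Assume $d\alpha=0$. The identities in \eqref{eq:tors-curv} are tensorial, so it suffices to check them on a local frame. For $a,b\in\{\xi_0,\eta_i,\zeta_i\}$, the functions $\alpha(a)$ and $\alpha(b)$ are constant (equal to $1$ or $0$). Hence
\[
\alpha([a,b])=a\cdot\alpha(b)-b\cdot\alpha(a)-d\alpha(a,b)=0.
\]
In particular $\alpha([\xi_0,\zeta_i])=0$ and $\alpha([\eta_i,\zeta_j])=0$, so every right-hand side in \eqref{eq:tors-curv} vanishes. This gives the following:
\begin{itemize}
\item $\tau(\eta_1,\zeta_2)=0$, i.e.\ $\tau|_{V\times\iota(Q)}=0$;
\item $R(\xi_0,\eta_1)\eta_2=0$ and $R(\eta_1,\eta_2)\eta_3=0$.
\end{itemize}
Since $E$ has rank one, $\Lambda^2H=(E\wedge V)\oplus\Lambda^2V$. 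So $R|_{\Lambda^2H\times V}=0$ follows by linearity over functions in $\eta_1$, $\eta_2$, $\eta_3$ and in $\xi=f\xi_0$.

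I do not expect a real obstacle. The one step needing care is Step 1, i.e.\ recognizing the third line of \eqref{homog1-props} as exactly the condition $d\alpha|_{E\times V}=0$. Everything afterwards is the observation that all curvature and torsion terms in Theorem \ref{tors-curv} are expressed through $d\alpha$, as already noted in Remark \ref{rem:curvature}.
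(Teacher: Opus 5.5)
Your proposal is correct and follows the paper's proof essentially step for step. The paper likewise identifies $\alpha$ with $L(\alpha_0)$ via the third line of \eqref{homog1-props}, doing this inside part (2) rather than up front. It also obtains distinguished scales in part (1) by pulling back $dg$ from a local leaf space of $V$. In part (2) it concludes by noting that every right-hand side in \eqref{eq:tors-curv} is expressible through $d\alpha$ and hence vanishes.
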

\begin{proof}
    (1) Involutivity of $V$ implies that there is an open neighborhood $U$ of $x$ in $M$ and a surjective submersion $\psi:U\to N$ onto a smooth manifold $N$ such that $\text{ker}(T_y\psi)=V_y$ for any $y\in U$. Now $T_x\psi(E_x)$ is a line $\ell\subset T_{\psi(x)N}$ and we can choose a smooth function $f:N\to\mathbb R$ such that $df(\psi(x))$ has a non-zero restriction to $\ell$. Define $\alpha:=\psi^*(df)\in\Omega^1(U)$ and $\alpha_0=\alpha|_E\in\Gamma(E^*)$. Then by construction $\alpha|_V=0$ and $d\alpha=0$, which by parts (1) and (3) of Proposition \ref{prop:BGG} implies that $\alpha=L(\alpha_0)$ and $D(\alpha_0)=0$. Since $\alpha_0(x)$ is nonzero by construction, it defines a distinguished Weyl structure on a sufficiently small open neighborhood of $x$. 

    (2) Suppose that $\alpha_0\in\Gamma(E^*)$ defines a distinguished Weyl structure $(\nabla,\Pi)$ corresponding to $\xi_0\in\Gamma(E)$, and let $\alpha\in\Omega^1(M)$ be characterized by $\Pi_E=\alpha\xi_0$. Then the third line of \eqref{homog1-props} says that for $\eta\in\Gamma(V)$, we get $0=\alpha([\xi_0,\eta])$. Since $\alpha(\xi_0)\equiv 1$ and $\alpha(\eta)=0$, this equals $-d\alpha(\xi_0,\eta)$, so $\alpha=L(\alpha_0)$ by part (1) of Proposition \ref{prop:BGG}. Moreover, part (4) of that Propostion shows that $d\alpha=0$. Now the right hand sides of all three lines in \eqref{eq:tors-curv} can be rewritten in terms of $d\alpha$, showing that the left hand sides of these equations all vanish. This implies the claimed properties of torsion and curvature. 
\end{proof}

\begin{remark}\label{rem:dist}
    The construction of distinguished scales in the proof of part (1) actually produces all distinguished scales locally. If $\alpha_0\in\Gamma(E^*)$ is a distinguished scale then by definition $\alpha=L(\alpha_0)$ satisfies $\alpha|_V=0$ and $d\alpha=0$, so in prticular, sections of $V$ insert trivially into both $\alpha$ and $d\alpha$. It is well known that $\alpha$ can locally be written as the pullback of a one-form on a local leaf space. The latter form of course has to be closed, too, and hence locally exact. 
\end{remark}

\section{Tractor calculus}\label{4}
As an application of the description of Weyl structures, we now develop the elementary approach to tractor calculus for path geometries. This is parallel to the constructions in \cite{BEG} for conformal and projective structures and to the one for CR structures (based on Webster-Tanaka connections) in \cite{Go-Gr}. While this approach produces the tractor bundles and tractor connections obtained from the general theory of parabolic geometries, the description of tractors can be used without reference to this theory. The standard tractor bundle $\mathcal T M$ is defined via its space of sections. Given a choice of scale, a section is defined via a triple of sections of bundles constructed from $E$, $V$, and $TM/H$ and one defines explicitly how the same section is decomposed with respect to a different scale, which provides a definition of sections as equivalence classes. The second step then is to define a linear connection on $\mathcal TM$. This is defined in the identification with triples determined by a scale using the Weyl structure determined by that scale and additional data, the Schouten tensor to be discussed below. It follows then from the general theory that a different choice of scale leads to the same linear connection on $\mathcal{T}M$, so this \textit{standard tractor connection} is canonically associated to the path geometry. There is an alternative elementary approach to this by explicit computation. To apply this approach, one needs to understand how the objects involved in the construction change under a change of scale, so we provide the necessary results. For the Weyl structure itself, this can be deduced from the construction in Section \ref{2} by straightforward computations. This would also be possible for the Schouten tensor, but the computations get very involved there. Hence in this case, we just sketch how to deduce the explicit formulae for the tensor and their behavior under a change of scale from the general theory. 

\subsection{Changing the scale}\label{4.1} 
We start by analyzing how the objects associated to a choice of scale in Section \ref{2} change if one changes the scale, which of course is of interest for any application of Weyl structures. Given a non-vanishing section $\xi_0\in\Gamma(E)$, any other such section can be written as $\hat\xi_0=e^f\xi_0$ for some smooth function $f$ and we will follow the usual convention that objects associated to $\hat\xi_0$ are indicated by hats. 

Before we can do that, we need a few observations on $1$-forms. Given any one-form $\gamma\in\Omega^1(M)$, we can restrict $\gamma$ to $E$ and $V$ to obtain sections $\gamma^E$ and $\gamma^V$ of $E^*$ and $V^*$, respectively. Via the isomorphism $Q\cong E\otimes V$, we can also view $\gamma^E$ as defining a bundle map $Q\to V$ and $\gamma^V$ as defining a bundle map $Q\to E$. We will denote these by just plugging elements sections of $Q$ into $\gamma^E$ and $\gamma^V$, so in particular for $\xi\in\Gamma(E)$ and $\eta\in\Gamma(V)$, we get $\gamma^E(\mathcal L(\xi,\eta))=\gamma(\xi)\eta$ and $\gamma^V(\mathcal L(\xi,\eta))=\gamma(\eta)\xi$. This notation is justified by the fact that these operations behave in the usual way with respect to covariant derivatives by a Weyl connection, so for $\psi\in\mathfrak X(M)$ and $\zeta\in\Gamma(Q)$, we get $\nabla_\psi \gamma^E(\zeta)=(\nabla_\psi\gamma^E)(\zeta)-\gamma^E(\nabla_\psi \zeta)$ and so on. In order to not make the notation too complicated, we will not indicate the components in the case that we just plug a vector field into a one-form, so we will usually write $\gamma(\xi)$ and $\gamma(\eta)$ and not $\gamma^E(\xi)$ and $\gamma^V(\eta)$ for $\xi\in\Gamma(E)$ and $V\in\Gamma(V)$. 

A choice of Weyl structure gives us a complementary component $\gamma^Q\in\Gamma(Q^*)$, which is defined by $\gamma^Q(\psi):=\gamma(\psi-\Pi(\psi))$ so it corresponds to the restriction of $\gamma$ to $\iota(Q)\subset T^*M$. In case of exact one-forms, we will use the notation $d^Ef$, $d^Vf$ and $d^Qf$ for the components of $df$.  

\begin{proposition}\label{prop:Weyl-transf}
Let $(M,E,V)$ be a path geometry, $\xi_0\in\Gamma(E)$ a scale, and put $\hat\xi_0:=e^f\xi_0$ for a smooth function $f$. Then using the notation introduced above, we get the following. 

(i) The projections $\Pi$ and $\hat\Pi$ defined by $\xi_0$ and $\hat\xi_0$ are for any $\psi\in\mathfrak X(M)$ related by
\begin{equation}\label{eq:Pi-trans}
        \hat\Pi_E(\psi)=\Pi_E(\psi)+d^Vf(q(\psi)) \qquad \hat\Pi_V(\psi)=\Pi_V(\psi)+\tfrac12d^Ef(q(\psi)). 
\end{equation}
Consequently, $\hat\alpha(\psi)=e^{-f}\big(\alpha(\psi)+\alpha_0(d^Vf(q(\psi))\big)$ and for $\xi\in\Gamma(E)$ and $\eta\in\Gamma(V)$, we get \begin{equation}\label{hat-zeta}
 \hat\iota(\mathcal L(\xi,\eta))=\iota(\mathcal{L}(\xi,\eta))-df(\eta)\xi-\tfrac12 df(\xi)\eta.   
\end{equation}  

(ii) For $\psi\in\mathfrak X(M)$, $\xi,\xi_1,\xi_2\in\Gamma(E)$, $\eta,\eta_1,\eta_2\in\Gamma(V)$ and $\zeta_j:=\iota(\mathcal L(\xi_j,\eta_j))$, the Weyl connections on $E$ and $V$ induced by $\xi_0$ and $\hat\xi_0$ are related by  
\begin{equation}\label{eq:ne-trans}
    \hat\nabla_\psi \xi=\nabla_\psi \xi-df(\psi)\xi. 
\end{equation}
\begin{equation}\label{eq:nv-trans}
    \begin{cases}
      \hat\nabla_\xi\eta=\nabla_\xi\eta+\tfrac12 df(\xi)\eta \\
      \hat\nabla_{\eta_1}\eta_2=\nabla_{\eta_1}\eta_2+df(\eta_1)\eta_2+df(\eta_2)\eta_1 \\
      \hat\nabla_{\zeta_1}\eta_2=\nabla_{\zeta_1}\eta_2+\tfrac12 df(\xi_1)df(\eta_1)\eta_2+df(\xi_1)df(\eta_2)\eta_1-d^Qf(\mathcal L(\xi_1,\eta_2))\eta_1. 
    \end{cases}
\end{equation}
\begin{equation}\label{eq:nq-trans}
    \begin{cases}
        \begin{aligned}
            \hat\nabla_\xi\zeta_1=&\nabla_\xi\zeta_1-\tfrac12df(\xi)\zeta_1+\big((\nabla_\xi df)(\eta_1)-\tfrac12df(\xi)df(\eta_1)\big)\xi_1\\+&\tfrac12\big((\nabla_\xi df)(\xi_1)+df(\xi)df(\xi_1)\big)\eta_1
        \end{aligned}\\
        \begin{aligned}
            \hat\nabla_\eta\zeta_1=&\nabla_\eta\zeta_1+df(\eta_1)\iota(\mathcal{L}(\xi_1,\eta))+\big((\nabla_\eta df)(\eta_1)-2df(\eta)df(\eta_1)\big)\xi_1\\+&\tfrac12\big((\nabla_\eta df)(\xi_1)+df(\eta)df(\xi_1)\big)\eta_1
        \end{aligned}\\
        \begin{aligned}
            \hat\nabla_{\zeta_1}\zeta_2=&\nabla_{\zeta_1}\zeta_2+\big(\tfrac12df(\xi_1)df(\eta_1)-df(\zeta_1)\big)\zeta_2+\big(df(\xi_2)df(\eta_2)-df(\zeta_2)\big)\zeta_1\\ 
            +&(\nabla_{\zeta_1}df)(\eta_2)\xi_2+\big(df(\zeta_2)df(\eta_1)-\tfrac32df(\xi_2)df(\eta_1)df(\eta_2)\big)\xi_1\\
            +&\tfrac12\big((\nabla_{\zeta_1} df)(\xi_2)+df(\zeta_1)df(\xi_2)\big)\eta_2.
        \end{aligned}
    \end{cases}
\end{equation}
\end{proposition}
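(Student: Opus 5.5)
The plan is to exploit the uniqueness statements in Propositions \ref{prop:homog1} and \ref{prop:homog2}. For each piece of data, write down the claimed hatted object and check that it satisfies the characterizing conditions \eqref{homog1-props} and \eqref{homog2-props} for the scale $\hat\xi_0=e^f\xi_0$. Computing directly from these formulas would also work, but verification is cleaner. The conditions are applied in the order they are stated: first $\hat\nabla^E$, then $\hat\Pi$, then the partial connection $\hat\nabla^V$ in $H$-directions, then $\hat\nabla^{\iota(Q)}$ in $H$-directions, and finally the $\iota(Q)$-directions.

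\textbf{Step 1: $\hat\nabla^E$.} Since $\hat\nabla\hat\xi_0=0$ forces $\hat\nabla\xi_0=-df\otimes\xi_0$, formula \eqref{eq:ne-trans} is immediate.

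\textbf{Step 2: $\hat\Pi$.} Use the third line of \eqref{homog1-props} for $\hat\xi_0$. Writing $\xi=g\xi_0=ge^{-f}\hat\xi_0$ gives
\[
\hat\Pi_E([\xi,\eta])=-d(ge^{-f})(\eta)\hat\xi_0=\Pi_E([\xi,\eta])+g\,df(\eta)\xi_0 .
\]
Since $q([\xi,\eta])=\mathcal L(\xi,\eta)$, this is $d^Vf(q(\psi))$ evaluated on $\psi=[\xi,\eta]$. Because both sides are tensorial and brackets $[\xi,\eta]$ together with $H$ span $TM$, the first formula of \eqref{eq:Pi-trans} follows.

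For $\hat\Pi_V$, use the second line of \eqref{homog1-props} with $\xi=\hat\xi_0=e^f\xi_0$. Expanding $[e^f\xi_0,[e^f\xi_0,\eta]]$ and comparing with the unhatted version shows that the difference $\hat\Pi_V-\Pi_V$ evaluated on $[\xi,\eta]$ is $\tfrac12 df(\xi)\eta$. This is where the factor $\tfrac12$ enters, and it gives the first line of \eqref{eq:nv-trans} at the same time.

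The formulas for $\hat\alpha$ and \eqref{hat-zeta} then follow formally. Indeed, $\hat\iota(\mathcal L(\xi,\eta))$ is characterized by lying in $\ker\hat\Pi$ and projecting to $\mathcal L(\xi,\eta)$, so it equals $\iota(\mathcal L(\xi,\eta))-(\hat\Pi-\Pi)(\iota(\mathcal L(\xi,\eta)))$.

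\textbf{Step 3: remaining $H$-directions.} The second line of \eqref{eq:nv-trans} is obtained from the fourth line of \eqref{homog1-props} for $\hat\xi_0$. Expanding $[\eta_1,[e^f\xi_0,\eta_2]]$ under $q$ produces $df(\eta_1)\mathcal L(\xi_0,\eta_2)$. The extra term $df(\eta_2)\eta_1$ comes from rewriting $\mathcal L(\hat\xi_0,\cdot)$ and from the $\hat\Pi_E$-correction.

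The $\hat\nabla_\xi\zeta_1$ and $\hat\nabla_\eta\zeta_1$ lines of \eqref{eq:nq-trans} should not be derived independently. Instead, apply condition (ii) for the hatted structure, $\hat\nabla\hat\iota(\mathcal L(\xi_1,\eta_1))=\hat\iota(\mathcal L(\hat\nabla\xi_1,\eta_1)+\mathcal L(\xi_1,\hat\nabla\eta_1))$. Substitute \eqref{hat-zeta} to write $\zeta_1=\hat\iota(\dots)+df(\eta_1)\xi_1+\tfrac12df(\xi_1)\eta_1$ and differentiate with $\hat\nabla$. The already-known transformation laws for $E$ and $V$ then yield everything. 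The terms $(\nabla df)$ arise when the derivatives of $df(\eta_1)$ and $df(\xi_1)$ are re-expressed through $\nabla$.

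\textbf{Step 4: $\iota(Q)$-directions.} Here two things change. The new complement changes which direction counts as "$\zeta$", so $\hat\nabla_{\zeta_1}=\hat\nabla_{\hat\zeta_1}+df(\eta_1)\hat\nabla_{\xi_1}+\tfrac12df(\xi_1)\hat\nabla_{\eta_1}$. In addition, $\alpha$ changes by $\hat\alpha$, and this change appears in the first line of \eqref{homog2-props}.

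To obtain the third line of \eqref{eq:nv-trans}, apply \eqref{homog2-props} for $\hat\xi_0$, writing $\hat\zeta=\hat\iota(\mathcal L(\hat\xi_0,\eta))$. Then compute $\hat\Pi_V([\hat\zeta,\tilde\eta])$ using \eqref{eq:Pi-trans} and \eqref{hat-zeta}, and compute $\hat\alpha([\hat\xi_0,[\hat\xi_0,\tilde\eta]])$ using the $\hat\alpha$ formula. Exchanging between $\hat\zeta$ and $\zeta$ brings in the $H$-direction laws. The $d^Qf$ term should come from $\hat\alpha$ applied to a double bracket, via $d^Vf(q(\cdot))$. The last line of \eqref{eq:nq-trans} then follows from the second line of \eqref{homog2-props} exactly as in Step 3.

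I expect Step 4 to be the main obstacle, because of the bookkeeping. Quadratic and cubic terms in $df$ accumulate there, for instance $-\tfrac32df(\xi_2)df(\eta_1)df(\eta_2)$. To keep this manageable I would work in an adapted frame as in the proof of Theorem \ref{tors-curv}, with $\nabla_{\xi_0}\eta_i=0$ so that $\zeta_i=[\xi_0,\eta_i]$. Since both sides are tensorial, it suffices to check the identity for such frames. I would also repeatedly use that $\alpha$ vanishes on $V$ and $\iota(Q)$ in order to discard terms.
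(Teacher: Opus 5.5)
Your proposal is correct. Apart from one step it is the paper's argument. Part (i), \eqref{eq:ne-trans} and the first two lines of \eqref{eq:nv-trans} are done by direct computation on brackets with $\xi_0$ or $\hat\xi_0$. \eqref{eq:nq-trans} is obtained by differentiating $\zeta_1=\hat\iota(\mathcal L(\xi_1,\eta_1))+df(\eta_1)\xi_1+\tfrac12df(\xi_1)\eta_1$ with $\hat\nabla$, using that $\hat\iota\circ\mathcal L$ is $\hat\nabla$-parallel and expanding $\hat\iota$ once more via \eqref{hat-zeta}. One small correction: in Step 3 the extra term $df(\eta_2)\eta_1$ comes from the $E$-component $-df(\eta_2)\hat\xi_0$ of $[\hat\xi_0,\eta_2]$. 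No $\hat\Pi_E$ enters the fourth line of \eqref{homog1-props}.

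The genuine difference is the third line of \eqref{eq:nv-trans}. The paper never evaluates \eqref{homog2-props} for $\hat\xi_0$. Instead it takes a frame with $\nabla_{\xi_0}\eta_i=0$, so that $\zeta_1=[\xi_0,\eta_1]$ is a bracket. It then uses the curvature identity to write $\nabla_{[\xi_0,\eta_1]}\eta_2$ and $\hat\nabla_{[\xi_0,\eta_1]}\eta_2$ as iterated $H$-derivatives minus $R(\xi_0,\eta_1)\eta_2$, respectively $\hat R(\xi_0,\eta_1)\eta_2$. Finally it inserts the second line of \eqref{eq:tors-curv} for both scales, so all $\iota(Q)$-information is carried by $\alpha$ and $\hat\alpha$.

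Your route also works, and it does not actually need the adapted frame. You evaluate the first line of \eqref{homog2-props} for $\hat\xi_0$ directly and then shift via $\hat\nabla_{\zeta_1}=\hat\nabla_{\hat\zeta_1}+df(\eta_1)\hat\nabla_{\xi_1}+\tfrac12df(\xi_1)\hat\nabla_{\eta_1}$. The terms $\alpha([\xi_0,[\xi_0,\tilde\eta]])$ cancel between $\hat\Pi_V([\hat\zeta,\tilde\eta])$ and the $\hat\alpha$-term. The $\nabla_{\xi_0}\tilde\eta$ and $\nabla_\eta\tilde\eta$ terms cancel against the shift.

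It does use inputs you did not list:
\begin{itemize}
\item $q([\zeta,\tilde\eta])=-\mathcal L(\xi_0,\nabla_{\tilde\eta}\eta)$, which follows from $q(\tau(V,\iota(Q)))=0$ and condition (ii). You need it to evaluate $\tfrac12d^Ef(q([\zeta,\tilde\eta]))$.
\item $\tau(V,V)=0$, which turns $\nabla_{\tilde\eta}\eta+[\eta,\tilde\eta]$ into $\nabla_\eta\tilde\eta$.
\end{itemize}

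Your guess about the origin of the $d^Qf$-term is also off. It arises from the $\alpha$-part of $\hat\alpha$ and from $\tilde\eta(\xi_0f)-\xi_0(\tilde\eta f)=-df([\xi_0,\tilde\eta])$, via $[\xi_0,\tilde\eta]=\nabla_{\xi_0}\tilde\eta+\iota(\mathcal L(\xi_0,\tilde\eta))$. In an adapted frame the $d^Vf(q(\cdot))$-part only produces the quadratic term.

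Your route is more self-contained: it needs only the defining formulas and elementary torsion facts. The paper's is shorter to write down because it recycles Theorem \ref{tors-curv}, at the cost of needing the frame so that $\zeta_1$ is a bracket.
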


\begin{proof}
Since the difference $\hat\Pi(\psi)-\Pi(\psi)$ depends only on $q(\psi)$, it suffices to verify
\eqref{eq:Pi-trans} for $\psi=[\xi,\eta]$ with $\xi\in\Gamma(E)$ and $\eta\in\Gamma(V)$, and one may even take $\xi=\xi_0$. For this case, all claims in (i) can be easily verified by direct computations.  Similarly, \eqref{eq:ne-trans} and the first two lines of \eqref{eq:nv-trans} can be easily verified directly. 

For the last line in \eqref{eq:nv-trans}, the argument is slightly more complicated. Since the difference between two connections is tensorial in all arguments, we may without loss of generality assume that $\nabla_{\xi_0}\eta_i=0$ for $i=1,2$, and this in particular implies that $\zeta_1=[\xi_0,\eta_1]$. But then we can use the definition of curvature to express $\nabla_{[\xi_0,\eta_1]}\eta_2$ in terms of $R(\xi_0,\eta_1)(\eta_2)$ and of iterated covariant derivatives in directions of $\xi_0$ and $\eta_1$. The same works for the Weyl connection $\hat\nabla$ and its curvature $\hat R$. The relevant components of $R$ and $\hat R$ can be expressed in terms of $\alpha$ and $\hat\alpha$ by Theorem \ref{tors-curv} from which we also understand the behavior under a change of scale. This suffices to verify the claimed formula by a direct computation. 

For \eqref{eq:nq-trans}, one first obtains $\hat\nabla_\psi \zeta_1=\hat\nabla_\psi\hat\zeta_1+\hat\nabla_\psi(df(\eta_1)\xi_1)+\tfrac12\hat\nabla_\psi(df(\xi_1)\eta_1)$ from \eqref{hat-zeta}. The first term in the right hand side, by definition can be expanded as $\hat\iota(\mathcal L(\hat\nabla_{\psi}\xi_1,\eta_1)+\mathcal L(\xi_1,\hat\nabla_\psi\eta_1))$, which again can be expanded using \eqref{hat-zeta}, which already leads to some cancellation. Apart from $\nabla_\psi\zeta_1$, all the remaining terms contain only quantities that we have already computed in \eqref{eq:ne-trans} or \eqref{eq:nv-trans}    
\end{proof}

\begin{remark}\label{rem4.2}
(1) This result also provides important information on how to relate our explicit description to the general theory of Weyl structures developed in Chapter 5 of \cite{Book}. In the general approach, the change of Weyl structure is described by a quantity $\Upsilon$, which in the case of path geometries decomposes into $\Upsilon^E\in\Gamma(E^*)$, $\Upsilon^V\in\Gamma(V^*)$ and $\Upsilon^Q\in\Gamma(Q^*)$. Comparing our explicit formulae to the ones provided by the general theory, one concludes that for $\xi\in\Gamma(E)$ and $\eta\in\Gamma(V)$, one obtains $\Upsilon^E(\xi)=-\frac12df(\xi)$, $\Upsilon^V(\eta)=df(\eta)$ and $\Upsilon^Q(\mathcal{L}(\xi,\eta))=\frac34df(\xi)df(\eta)-df(\iota(\mathcal{L}(\xi,\eta)))$.  

(2) When starting from a distinguished scale, the curvature property $R(E,V)V=0$ simplifies the computation for the last line in \eqref{eq:nv-trans}.
\end{remark}

\subsection{Density bundles}\label{4.2}
The importance of natural line bundles for the construction of invariant operations is well known in conformal and CR geometry and from the general theory of parabolic geometries. Similarly as in CR geometries, the bundles of this type associated to a path geometry are parametrized by two real numbers. Since we already have the line bundle $E$ available, we have to construct one more basic line bundle. 

Consider the line bundle $E\otimes\Lambda^n V^*$. Forming its square (i.e.\ the tensor product with itself) $(E\otimes\Lambda^nV^*)^2$, we obtain a line bundle which is orientable and hence trivial. Thus we can form any real power of this bundle and using this we define:
\begin{definition}\label{def:L}
For a (generalized) path geometry $(M,E,V)$, we define the line bundle $L\to M$ as $L:=(E\otimes\Lambda^nV^*)^{\frac{2}{2n+4}}$. 
\end{definition}
Any Weyl structure induces linear connections on $E$ and $V$ and hence on $E^*\otimes\Lambda^nV$ and thus on $L$. It turns out that, together with $E$, the line bundle $L$ can be used to obtain all natural line bundles on a path geometry by tensorial constructions. In particular, all these line bundles inherit a Weyl connection and the behavior of these connections under a change of scale can be immediately deduced from $\eqref{eq:ne-trans}$ and the following result. 

\begin{proposition}\label{prop:L-trans}
    Let $(M,E,V)$ be a (generalized) path geometry, let $\xi_0\in\Gamma(E)$ be a scale and put $\hat\xi_0:=e^f\xi_0$ for a smooth function $f$. Take $\xi\in\Gamma(E)$, $\eta\in\Gamma(V)$ and put $\zeta:=\iota(\mathcal L (\xi,\eta))$ and let $\nabla$ and $\hat\nabla$ denote Weyl connections with respect to $\xi_0$ and $\hat\xi_0$, respectively. Then for $\rho\in\Gamma(L)$ we get 
    \begin{equation}\label{n-trans-L}
        \begin{cases}
            \hat\nabla_\xi \rho=\nabla_\xi\rho -\tfrac12 df(\xi)\rho\\
            \hat\nabla_{\eta}\rho=\nabla_{\eta}\rho-df(\eta)\rho\\
            \hat\nabla_{\zeta}\rho=\nabla_{\zeta}\rho-\frac12df(\xi)df(\eta)\rho. 
        \end{cases} 
    \end{equation} 
\end{proposition}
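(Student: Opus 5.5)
The plan is to read off the transformation of the Weyl connection on $L$ from the transformation formulae \eqref{eq:ne-trans} and \eqref{eq:nv-trans} for $E$ and $V$, using only functoriality of induced connections. For any vector field $\psi$, write $\hat\nabla_\psi-\nabla_\psi=A_\psi$ on $V$, where $A_\psi\in\Gamma(L(V,V))$. Then the induced connections on $\Lambda^nV^*$ differ by $-\tr(A_\psi)$, acting as multiplication. By \eqref{eq:ne-trans}, the connections on $E$ differ by $-df(\psi)$. Hence on $E\otimes\Lambda^nV^*$ the difference is multiplication by $c(\psi):=-df(\psi)-\tr(A_\psi)$. Taking the real power $\frac{2}{2n+4}=\frac1{n+2}$ of the trivial square $(E\otimes\Lambda^nV^*)^2$ multiplies this difference by $\frac1{n+2}$. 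So for $\rho\in\Gamma(L)$ we get $\hat\nabla_\psi\rho=\nabla_\psi\rho+\frac1{n+2}c(\psi)\rho$. It then remains to compute $\tr(A_\psi)$ for $\psi=\xi$, $\eta$, $\zeta$.

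\textbf{Direction $\xi\in\Gamma(E)$.} By the first line of \eqref{eq:nv-trans}, $A_\xi=\tfrac12df(\xi)\id_V$, so $\tr(A_\xi)=\tfrac n2 df(\xi)$. This gives $c(\xi)=-\tfrac{n+2}2df(\xi)$, and hence the coefficient $-\tfrac12df(\xi)$.

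\textbf{Direction $\eta\in\Gamma(V)$.} By the second line, $A_\eta(\eta_2)=df(\eta)\eta_2+df(\eta_2)\eta$. The first summand has trace $n\,df(\eta)$. The second is the rank-one map $\eta_2\mapsto df(\eta_2)\eta$, with trace $df(\eta)$. So $\tr(A_\eta)=(n+1)df(\eta)$ and $c(\eta)=-(n+2)df(\eta)$, which yields the coefficient $-df(\eta)$.

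\textbf{Direction $\zeta=\iota(\mathcal L(\xi,\eta))$.} This is the only step needing care. I apply the third line of \eqref{eq:nv-trans} with $\xi_1=\xi$, $\eta_1=\eta$. Then $A_\zeta$ consists of three pieces:
\begin{itemize}
\item the multiple $\tfrac12df(\xi)df(\eta)\id_V$, with trace $\tfrac n2df(\xi)df(\eta)$;
\item the rank-one map $\eta_2\mapsto df(\xi)df(\eta_2)\eta$, with trace $df(\xi)df(\eta)$;
\item the rank-one map $\eta_2\mapsto -d^Qf(\mathcal L(\xi,\eta_2))\eta$, with trace $-d^Qf(\mathcal L(\xi,\eta))$.
\end{itemize}
Since $\zeta$ lies in $\iota(Q)$, the definition $\gamma^Q(\psi)=\gamma(\psi-\Pi(\psi))$ gives $d^Qf(\mathcal L(\xi,\eta))=df(\zeta)$. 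Therefore
$$c(\zeta)=-df(\zeta)-\tfrac{n+2}2df(\xi)df(\eta)+df(\zeta)=-\tfrac{n+2}2df(\xi)df(\eta).$$
The $df(\zeta)$ terms cancel, and dividing by $n+2$ gives the claimed $-\tfrac12df(\xi)df(\eta)$.

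The main (mild) obstacle is this last direction. One must correctly identify the $d^Qf$ term with $df(\zeta)$ so that it cancels against the $E$-contribution, and one must take the traces of the rank-one endomorphisms correctly. Everything else is bookkeeping. Since all three formulas are tensorial in $\psi$, checking them on $\xi$, $\eta$ and $\zeta$ suffices.
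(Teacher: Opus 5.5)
Your proof is correct and takes essentially the same route as the paper. The paper also computes the change of the induced connection on $\Lambda^nV^*$ using a local frame of $V$, which amounts to your trace term $-\tr(A_\psi)$. It then combines this with \eqref{eq:ne-trans} on $E$ and passes to the power defining $L$, leaving the explicit trace computations that you carry out to the reader; this includes the cancellation via $d^Qf(\mathcal L(\xi,\eta))=df(\zeta)$.
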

\begin{proof}
   Let $\omega\in\Gamma(\Lambda^n V^*)$ be a local nowhere vanishing section and choose a local frame $\eta_1,\dots,\eta_n$ for $V$ such that $\omega(\eta_1,\dots,\eta_n)\equiv 1$. For a vector field $\psi\in\mathfrak X(M)$, we thus have 
   $$(\nabla_{\psi}\omega)(\eta_1,\dots,\eta_n)=-\textstyle\sum_i\omega(\eta_1,\dots,\nabla_\psi\eta_i,\dots,\eta_n)$$ 
   and likewise for $\hat\nabla$. Using this, the claimed formulae can be easily verified directly from the results in Proposition \ref{prop:Weyl-transf}, taking into account that we can always expand a section $\eta\in\Gamma(V)$ in terms of the $\eta_i$.  
\end{proof}

\begin{remark}
(1) Together with \eqref{eq:ne-trans}, this result readily shows that on the bundle $E^*\otimes L$, the covariant derivatives in $V$-directions with respect to all Weyl connections agree. Hence this line bundle inherits a canonical partial linear connection in $V$-directions. Similarly, the bundle $E^*\otimes L\otimes L$ inherits a canonical partial linear connection in $E$-directions. This corresponds to the fact that these two line bundles are relative tractor bundles (corresponding to two different parabolic subalgebras), compare to \cite{Relative}. 

(2) The line bundle $L$ also plays an important role in other parts of the theory of path geometries. As shown in \cite{Guo_Fef}, the complement of the zero section in $L$ inherits a natural almost Grassmannian structure from the given path geometry via a so-called Fefferman construction, see also \cite{Crampin-Saunders}.   
\end{remark}

\subsection{Standard tractors}\label{4.3}
As indicated above, we will define a vector bundle $\mathcal TM\to M$ of rank $n+2$ via its space of sections. Given a choice $\xi_0\in\Gamma(E)$ of scale, we define such a section as a triple consisting of sections $\nu\in\Gamma(V\otimes L)$, $\rho\in\Gamma(L)$ and $\tau\in\Gamma(E^*\otimes L)$. We will use a vectorial notation for such sections, sometimes indicating the scale defining the triple as a subscript. More frequently, we will use a second scale $\hat\xi_0=e^f\xi_0$ and denote quantities referring to that scale by hats. To define the relations of triples with respect to different scales, we have to recall some facts about one-forms from Section \ref{4.1}. To simplify notation, we will simply insert tensor products of vector fields with sections of line bundles into differential forms. So for example given $\rho\in\Gamma(L)$, we can form $d^Ef\otimes \rho\in\Gamma(E^*\otimes L)$, while for $\nu\in\Gamma(V\otimes L)$, we can form $df(\nu)=d^Vf(\nu)\in\Gamma(L)$ as well as $d^Qf(\nu)\in\Gamma(E^*\otimes L)$. The latter section is characterized by $d^Qf(\eta\otimes\rho)(\xi)=df(\iota(\mathcal L(\xi,\eta)))\rho$. Using these observations, we now define
\begin{equation}\label{tract-change}
\begin{pmatrix} \nu\\ \rho \\ \tau \end{pmatrix}_{\hat\xi_0}=\widehat{\begin{pmatrix} \nu\\ \rho \\ \tau \end{pmatrix}}=\begin{pmatrix} \nu\\ \rho-df(\nu) \\ \tau+\frac12d^Ef\otimes\rho+\frac12d^Ef\otimes df(\nu)-d^Qf(\nu)\end{pmatrix}_{\xi_0}
\end{equation}
It is easy to see that this defines an equivalence relation on the set of pairs of the form $(\xi_0,(\nu,\rho,\tau))$. On the set of equivalence classes, there is a well defined multiplication by smooth functions (which affects only the $(\nu,\rho,\tau)$ component) and this makes the set of equivalence classes into a sheaf of modules over the sheaf of smooth real valued functions. Using local frames of  $V\otimes L$, $L$, and $L\otimes E^*$, one easily concludes that this sheaf is locally free of constant rank $n+2$. Hence standard arguments show that there is a unique vector bundle $\mathcal{T}M\to M$ of rank $n+2$ which has this this sheaf as its sheaf  of sections. The detour via sections is necessary since the value of right hand side of \eqref{tract-change} in a point $x\in M$ does not only depend on the values of the ingredients in $x$ but also on derivatives (in the $\xi_0$-variable). This reflects the fact that, similar to tractor bundles in conformal and projective geometry, this tractor bundle is a second order object. As observed in part (2) of Remark \ref{rem:norm} a morphism of path geometries can be used to pull back scales, so by the above construction, it also defines a pullback of sections of the standard tractor bundles associated to the two path geometries. This in turn induces a vector bundle homomorphism between the tractor bundles. As for conformal and projective structures, morphisms of path geometries are determined locally around a point $x$ by their two-jet in $x$, but in general not by their one-jet in $x$. And indeed the map between the fibers of the tractor bundles over $x$ depends on the two-jet and not only on the one-jet of the morphism in $x$. 

It is easy, however, to obtain relations between the standard tactor bundle $\mathcal TM$ and simpler vector bundles. From \eqref{tract-change} it readily follows that inclusion of the bottom component gives rise to a well defined inclusion $E^*\otimes L\hookrightarrow \mathcal TM$, which we can view as defining a line subbundle $\mathcal T^1M\cong E^*\otimes L$. Similarly, mapping to the top component defines a surjective bundle map $\mathcal{T}M\to V\otimes L$, whose kernel is a subbundle $\mathcal T^0M\subset\mathcal TM$ of rank two, such that $\mathcal T^1M\subset\mathcal{T}^0M$ and $\mathcal T^0M/\mathcal T^1M\cong L$. In this language, the above definition of $\mathcal TM$ can be rephrased as the statement that any choice of scale gives rise to an isomomorphism $\mathcal{T}M\cong (E^*\otimes L)\oplus L\oplus (V\otimes L)$ and \eqref{tract-change} describes how this isomorphism depends on this choice. 

Having the vector bundle $\mathcal{T}M$ at hand, we can define more general tractor bundles via tensorial operations. For each of the resulting bundles a choice of scale defines an induced isomorphism to a direct sum of simpler bundles, and the dependence of this isomorphism on the scale can be deduced from \eqref{tract-change}. As the simplest example, we can define the \textit{standard cotractor bundle} as the dual bundle $\mathcal{T}^*M$ to $\mathcal{T}M$. Any choice of scale induces an isomorphism $\mathcal{T}^*M\cong (V^*\otimes L^*)\oplus L^*\oplus (E\otimes L^*)$ and the dependence of this isomorphism on $\xi_0$ can be deduced from \eqref{tract-change}.  

\subsection{The Schouten tensor}\label{4.4}
This is the last ingredient that we need in order to define the canonical tractor connection on $\mathcal{T}M$. 
The classical Schouten tensor of a Riemannian metric is a modification of the Ricci curvature (which contains the same information as the Ricci curvature), which has nicer behavior under conformal rescalings. There is a general version of the Schouten tensor associated to a Weyl structure for parabolic geometries and we introduce the version for exact Weyl structures on path geometries as an ad hoc definition. This tensor can be viewed as a one-form with values in $T^*M$ and hence as a $\binom{0}{2}$-tensor field, and we will describe it in terms of the splitting $TM=E\oplus V\oplus\iota(Q)$ induced by a choice of scale. 

As a preparation, we need some information on the torsion and the curvature of a Weyl connection. From the definition and the first condition in part (iii) of Theorem \ref{thm:main}, it follows that the restriction of the torsion $\tau$ to $E\times V$ coincides with $-\iota\circ\mathcal L:E\times V\to\iota(Q)$, so this is independent of the Weyl structure. The second condition in part (iii) of Theorem \ref{thm:main} implies that the first interesting component of the torsion is $\tau:H\times \iota(Q)\to H$, so in particular, we may look at the component mapping $E\times\iota(Q)$ to $V$. Given sections $\xi_1,\xi_2\in\Gamma (E)$, we get a section of $L(V,V)$ that sends $\eta\in\Gamma(V)$ to the $V$-component of $\tau(\xi_1,\iota(\mathcal L(\xi_2,\eta)))\in\Gamma(H)$ and we can form the (point-wise) trace of this to obtain a smooth function $\tr_V(\tau(\xi_1,\iota(\mathcal L(\xi_2,\ ))))$ on $M$.  

The Weyl connections on $TM$ are built up from connections on the bundles $E$, $V$ and $\iota(Q)$, so concerning their curvature, one should rather look at the curvatures of these individual connections. Starting from a scale, the connection on $E$ is flat by definition, while the connections on $V$ and $\iota(Q)$ are related by the isomorphism induced by $\iota\circ\mathcal L(\xi_0,\ ):V\to Q$. Hence their curvatures are also related by this isomorphism, so the main relevant quantity is the curvature $R^V\in\Omega^2(M,V^*\otimes V)$ of the linear connection $\nabla^V$ on $V$. One can form two traces of this curvature. The simpler one is the trace in the values, which is related to the curvature of the induced linear connection on $\Lambda^nV^*$ and hence (since the induced connection on $E$ is flat) of the induced linear connection on the line bundle $L$. Hence to avoid possible confusion, we will simply take the curvature $R^L\in\Omega^2(M)$ of the induced linear connection on $L$ as one of the basic inputs. For the second trace, we can restrict one of the form entries to $V$ and then form a Ricci-type contractions $\Ric^{\nabla^V}\in\Gamma(T^*M\otimes V^*)$, i.e.\ define  $\Ric^{\nabla^V}(\psi,\eta)=tr\,(R(\ ,\psi)(\eta))$ for $\psi\in\mathfrak X(M),\eta\in\Gamma(V)$. Armed with these observations, we can now define the Schouten tensor. 

\begin{definition}\label{def-Rho}
Let $(M,E,V)$ be a path geometry, $\xi_0\in\Gamma(E)$ a scale, $\tau$ the torsion of the induced Weyl connection, $R^L$ the curvature of the induced Weyl connection on $L$, and $\Ric^{\nabla^V}$ the Ricci-type contration of the curvature of the Weyl connection on $V$ as discussed above. Then we define (the components of the) Schouten tensor $\Rho\in\Gamma(T^*M\otimes T^*M)$ for $\xi,\xi_i\in\Gamma(E)$, $\eta,\eta_i\in\Gamma(V)$ and $\zeta_i:=\iota(\mathcal L(\xi_i,\eta_i))$ for $i=1,2$ as follows. 

(i) on $H\otimes H$:
\begin{align*}
      \Rho(\xi_1,\xi_2):&=-\tfrac 1 n \tr_V(\tau(\xi_1,\iota(\mathcal L(\xi_2,\ )))).\\
      \Rho(\xi,\eta)&=-2\Rho(\eta,\xi):=\tfrac{2}{2n+1}Ric^{\nabla^V}(\xi,\eta).\\
      \Rho(\eta_1,\eta_2):&=\tfrac{1}{n-1}Ric^{\nabla^V}(\eta_1,\eta_2). 
\end{align*}
(ii) on $H\otimes\iota(Q)\oplus\iota(Q)\otimes H$: 
\begin{align*}
    \Rho(\zeta_1,\eta)=-\Rho(\eta,\zeta_1):&=-R^L(\zeta_1,\eta).\\
\Rho(\zeta_1,\xi)+\Rho(\xi,\zeta_1):
&=(\nabla_{\xi_1}\Rho)(\eta_1,\xi)-(\nabla_{\eta_1}\Rho)(\xi_1,\xi).\\
\Rho(\zeta_1,\xi)-\Rho(\xi,\zeta_1):&=-R^L(\zeta_1,\xi).
\end{align*}
(iii) on $\iota(Q)\otimes\iota(Q)$: 
\begin{align*}
\Rho(\zeta_1,\zeta_2):&=(\nabla_{\zeta_1}\Rho)(\eta_2,\xi_2)-(\nabla_{\eta_2}\Rho)(\zeta_1,\xi_2)\\
&\quad -\Rho(\xi_1,\xi_2)\Rho(\eta_1,\eta_2)+\Rho(\eta_1,\xi_2)\Rho(\eta_2,\xi_1).
\end{align*}
\end{definition}

\begin{remark}\label{rem:Rho}
(1) While using this formally can be avoided by intricate computations, it is important to observe that these definitions reproduce the Schouten tensor provided by the theory of parabolic geometries. This ensures the validity of the formula for the tractor connection that we will derive. The basic way how this works is explained in part (3) of Remark \ref{rem:norm} above. One uses the relation between torsion, curvature and the Schouten tensor associated to a Weyl structure and the curvature $\kappa$ of the canonical Cartan connection together with information on $\kappa$ provided by the general theory. In principle, one could derive formulae for the components of $\Rho$ directly from the standard normalization condition on the Cartan curvature. Indeed, this is done for part (i) of Definition \ref{def-Rho}. However, there is a better approach, which leads to simpler expressions, since one can deduce additional information on the Cartan curvature using the so-called improved Bianchi identity. This is carried out in Lemma 14 of \cite{Guo_Fef}, showing vanishing of some components of $\kappa$, in particular of $\kappa|_{H\times H}$. In addition, by construction the Weyl connection $\nabla^E$ is flat. Knowing that some component of both $R$ and $\kappa$ vanishes, one concludes that the corresponding component of the tensor $\partial(\Rho)$ constructed from $\Rho$ vanishes, which leads to relations between components of $\Rho$. This produces the second and third formulae in (i) and the first and third formulae in (ii). For the remaining components, one (as expected) gets relations to traces of curvature and to (covariant) derivatives of previous components of $\Rho$ via the generalized Cotton-York tensor called $Y$ in Section 5.2 of \cite{Book}. 

(2) Observe that the Schouten tensor simplifies drastically for distinguished scales. As observed in Remark  \ref{rem:curvature}, the Ricci curvatures occurring in part (i) of Definition \ref{def-Rho} both can be expressed in terms of $d\alpha$, where $\alpha$ is the one-form characterized by $\Pi_E=\alpha\otimes\xi_0$. Hence for a distinguished scale, $\Rho$ vanishes on $V\times H$, which leads to further simplifications in the formulae in parts (ii) and (iii) of Definition \ref{def-Rho}. 
\end{remark}

\subsection{The tractor connection}\label{4.5}
We now have all the ingredients needed for explicit formulae for the tractor connection in the splitting determined by a scale, we just need a few further comments on notation. On the one hand, we have some natural contractions. For example for $\tau\in\Gamma(E^*\otimes L)$ and $\xi\in\Gamma(E)$, we write $\tau(\xi)\in\Gamma(L)$ for the obvious contraction. Slightly more complicated, for $\zeta=\iota(\mathcal L(\xi,\eta))\in\Gamma(\iota(Q))$ and $\tau$ as above, we obtain a well-defined section $\tau(\zeta):=\eta\otimes\tau(\xi)\in\Gamma(V\otimes L)$. On the other hand, for $\psi\in\mathfrak X(M)$ we can decompose $\Rho(\psi,\ )\in\Omega^1(M)$ as discussed in Section \ref{4.1} and we will write $\Rho^E(\psi)\in\Gamma(E^*)$, $\Rho^V(\psi)\in\Gamma(V^*)$ and $\Rho^Q(\psi)\in\Gamma(\iota(Q)^*)$ the components. Similarly as before, for a section $\nu\in\Gamma(V\otimes L)$, we can form $\Rho^Q(\psi)(\nu)\in\Gamma(E^*\otimes L)$. Using these observations, knowing that the Weyl structure, the Schouten tensor, and the splitting of the tractor bundle associated to a scale $\xi_0\in\Gamma(E)$ coincide with the objects coming from the general theory, Proposition 5.1.10 of \cite{Book} shows that the normal tractor connection $\nabla^{\mathcal T}$ is given by 
\begin{gather}
    \label{tract-xi}
    \nabla^{\mathcal T}_\xi\begin{pmatrix}\nu \\ \rho\\ \tau\end{pmatrix}_{\xi_0}=\begin{pmatrix}\nabla_\xi\nu \\ \nabla_\xi\rho+\tau(\xi)+\Rho(\xi,\nu)\\ \nabla_\xi \tau+\Rho^E(\xi)\otimes\rho-\Rho^Q(\xi)(\nu)\end{pmatrix}_{\xi_0}\\ 
    \label{tract-eta}
    \nabla^{\mathcal T}_\eta\begin{pmatrix}\nu \\ \rho\\ \tau\end{pmatrix}_{\xi_0}=\begin{pmatrix}\nabla_\eta\nu+\eta\otimes\rho \\ \nabla_\eta\rho+\Rho(\eta,\nu)\\ \nabla_\eta \tau+\Rho^E(\eta)\otimes\rho-\Rho^Q(\eta)(\nu)\end{pmatrix}_{\xi_0}\\
    \label{tract-zeta}
    \nabla^{\mathcal T}_\zeta\begin{pmatrix}\nu \\ \rho\\ \tau\end{pmatrix}_{\xi_0}=\begin{pmatrix}\nabla_\zeta\nu-\tau(\zeta) \\ \nabla_\zeta\rho+\Rho(\zeta,\nu)\\ \nabla_\zeta \tau+\Rho^E(\zeta)\otimes\rho-\Rho^Q(\zeta)(\nu)\end{pmatrix}_{\xi_0}
\end{gather}
Some facts that are known from the general theory are confirmed by these explicit formulae. For example, in the right hand side of \eqref{tract-eta} the two upper slots are independent of $\tau$. Hence the tractor connection actually induces a canonical partial linear connection in $V$-directions on the quotient bundle $\mathcal TM/\mathcal T^1M$. This reflects the fact that this quotient bundle is a relative tractor bundle. Indeed, it is easy to see that $\mathcal TM/\mathcal T^1M\cong TM/V\otimes (E^*\otimes L)$, and in Section \ref{4.2}, we have noted that $E^*\otimes L$ inherits a canonical partial connection in $V$-directions. On the other hand, as discussed in \cite{Relative}, $TM/V$ is a relative tractor bundle that carries a canonical partial connection in $V$-directions.

Similarly, the form of the top component of \eqref{tract-xi} shows that the subbundle $\mathcal T^0M$ (which corresponds to $\nu=0$) is invariant under derivatives in $E$-directions. Consequently, the tractor connection restricts to a canonical partial linear connection in $E$-directions on $\mathcal T^0M$. Again, this reflects the fact that $\mathcal T^0M$ is a relative tractor bundle (associated to a different parabolic subalgebra then the above), but this time this is not obtained from the general construction in \cite{Relative}.  

It is also easy to read off simple instances of the BGG machinery and the relative BGG machinery from the formulae for the tractor connection. Most easily, for $\rho\in\Gamma(L)$ we can restrict $\nabla\rho$ to a section $\nabla^E\rho\in\Gamma(E^*\otimes L)$ and consider the section (in obvious notation) $S(\rho):=(0,\rho,-\nabla^E\rho)_{\xi_0}\in\Gamma(\mathcal T^0M)$. Now from \eqref{tract-xi} it is obvious that $S(\rho)$ satisfies $\nabla^{\mathcal T}_\xi S(\rho)\in\Gamma(\mathcal T^1M)$ and is uniquely characterized by this condition as a section of $\mathcal T^0M$ projecting to $\rho$. This shows that $S(\rho)$ is independent of the choice of Weyl structure, so one has defined canonical differential operators $S:\Gamma(L)\to\Gamma(\mathcal T^0)$ and $D:\Gamma(L)\to\Gamma(E^*\otimes E^*\otimes L)$ with $D(\rho)$ defined as the restriction of $\nabla^{\mathcal T}S(\rho)$ to a section of $E^*\otimes\mathcal T^1M$. These are the first splitting operator and the first relative BGG operator determined by the relative tractor bundle $\mathcal T^0M$. 

One can also play a similar game with the relative tractor bundle $\mathcal TM/\mathcal T^1M$. Starting from $\nu\in\Gamma(V\otimes L)$, we can restrict $\nabla\nu$ to $\nabla^V\nu\in\Gamma(V^*\otimes V\otimes L)$ and then form an obvious contraction $\text{div}^V\nu\in\Gamma(L)$. We can then view $(\nu,-\tfrac1n\text{div}^V\nu,*)_{\xi_0}$ as a section $S(\nu)\in\Gamma(\mathcal TM/\mathcal T^1M)$. This has the property that the projection $D(\nu):=\nabla^V\nu-\tfrac1n\text{div}^V\nu$ of the restriction of $\nabla^{\mathcal T}S(\nu)$ to $V$-directions is trace-free and it is characterized by this property as a section of $\mathcal TM/\mathcal T^1M$. Thus we get a natural splitting operator $S:\Gamma(V\otimes L)\to\Gamma(\mathcal TM/\mathcal T^1M)$ and a relative BGG operator $D:\Gamma(V\otimes L)\to\Gamma((V^*\otimes V)_0\otimes L)$, where the subscript indicates the trace-free part.

\subsection{The dependence of the Schouten tensor on the scale}\label{4.6}
The only ingredient of the tractor connection, for which we have not yet clarified the behavior under a change of scale is the Schouten tensor. Understanding this is important for constructing invariant differential operators by direct computation. Moreover, it can be used to verify directly that the tractor connection is independent of the choice of scale and thus canonically associated to a path geometry. Our definition of the Schouten tensor involves components of the torsion and the curvature of the Weyl connections associated to a scale (which are formed using the decomposition $TM=E\oplus V\oplus\iota(Q)$ defined by the choice of scale). The dependence of the connection and the splitting on the scale is clarified in Propostions \ref{prop:Weyl-transf} and \ref{prop:L-trans}. Hence one can analyze the dependence of the relevant components of torsion and curvature by direct computation, which in turn can be used to analyze the dependence of the Schouten tensor. The necessary computations get quite involved however. 

Alternatively, one can use the fact that, as discussed in Section \ref{4.4} above, our definition of the Schouten tensor in Definition \ref{def-Rho} produces the Schouten tensor used in the general theory of parabolic geometries. This general theory also provides formulae for the dependence of the Schouten tensor on the scale, which are reasonably easy to handle. A bit of care is needed in these considerations, however. On the one hand, one needs the relation of the quantities $\Upsilon^E$, $\Upsilon^V$ and $\Upsilon^Q$ that are used in the general theory to $df$ from Remark \ref{rem4.2}. On the other hand, in the general theory, the Schouten tensor is interpreted as a one-form with values in the vector bundle $Q^*\oplus (E^*\oplus V^*)$. To get to our form of the Schouten tensor, one has to convert this into a one-form with values in $T^*M$, i.e.\ to a $\binom{0}{2}$-tensor field using the dual of the isomorphism $TM\cong E\oplus V\oplus Q$ provided by the Weyl structure. Once this is done, the computations are straightforward, so we don't reproduce them here but just collect the results.  

\begin{proposition}\label{prop:Rho-trans}
   Let $(M,E,V)$ be a (generalized) path geometry, $\xi_0\in\Gamma(E)$ a scale, and put $\hat\xi_0:=e^f\xi_0$ for a smooth function $f$. Then denoting quantities associated to $\hat\xi_0$ by hats, we get for $\xi,\xi_1,\xi_2\in\Gamma(E)$, $\eta,\eta_1,\eta_2\in\Gamma(V)$, $\zeta=\iota(\mathcal L(\xi,\eta))$ and so on, the following behavior under a change of scale. 

%\begin{equation}\label{Rho-transf}
    \begin{align*}
\hat{\Rho}(\xi_1,\xi_2)&=\Rho(\xi_1,\xi_2)-\tfrac12(\nabla_{\xi_1}df)(\xi_2)-\tfrac14 df(\xi_1)df(\xi_2).\\
   \hat{\Rho}(\xi,\eta)&=-2\hat\Rho(\eta,\xi)=\Rho(\xi,\eta)+(\nabla_\xi df)(\eta)-df(\xi)df(\eta)+df(\iota(\mathcal L(\xi,\eta))). \\
   \hat{\Rho}(\eta_1,\eta_2)&=\Rho(\eta_1,\eta_2)+(\nabla_{\eta_1}df)(\eta_2)-df(\eta_1)df(\eta_2).\\
   \hat{\Rho}(\zeta_1,\eta)&=\Rho(\zeta_1,\eta)+(\nabla_{\zeta_1}df)(\eta)+df(\eta_1)df(\iota(\mathcal L(\xi_1,\eta)))-df(\xi_1)df(\eta_1)df(\eta).\\
   \hat\Rho(\eta,\zeta_1)&=\Rho(\eta,\zeta_1)-\tfrac 1 2df(\xi_1)df(\eta_1)df(\eta)+df(\iota(\mathcal L(\xi_1,\eta)))df(\eta_1)\\
&\quad+(\nabla_{\eta}df)(\xi_1)df(\eta_1)+\tfrac 1 2 df(\xi_1)(\nabla_{\eta}df)(\eta_1)-(\nabla_{\eta}df)(\zeta_1).\\
\hat{\Rho}(\zeta_1,\xi)&=\Rho(\zeta_1,\xi)-\tfrac12(\nabla_{\zeta_1}df)(\xi)+\tfrac14df(\xi)df(\xi_1)df(\eta_1)-\tfrac12df(\xi)df(\zeta_1).\\
\hat{\Rho}(\xi,\zeta_1)&=\Rho(\xi,\zeta_1)+\tfrac12df(\xi)df(\xi_1)df(\eta_1)-\tfrac12df(\xi)df(\zeta_1)+(\nabla_{\xi_1} df)(\xi)df(\eta_1)\\
&\qquad+\tfrac12df(\xi_1)(\nabla_{\xi} df)(\eta_1)-(\nabla_{\xi} df)(\zeta_1).\\
\hat\Rho(\zeta_1,\zeta_2)&=\Rho(\zeta_1,\zeta_2)+(\nabla_{\zeta_1}df)(\xi_2)df(\eta_2)+\tfrac 1 2df(\xi_2)(\nabla_{\zeta_1}df)(\eta_2)-(\nabla_{\zeta_1}df)(\zeta_2)\\
-\tfrac 1 2&df(\xi_1)df(\xi_2)df(\eta_1)df(\eta_2)+\tfrac 1 2df(\xi_1)df(\eta_1)df(\zeta_2)+df(\xi_2)df(\eta_2)df(\zeta_1)-df(\zeta_1)df(\zeta_2).
\end{align*}
%\end{equation}
\end{proposition}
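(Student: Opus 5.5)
The plan follows the route the paper itself announces: use the general theory's transformation law for the Schouten tensor and translate it into the present conventions. Proposition 5.1.5 of \cite{Book} gives the dependence of $\Rho$ on the Weyl structure. For a change described by $\Upsilon\in\Gamma(T^*M)$, viewed as a one-form with values in $\operatorname{gr}(T^*M)=Q^*\oplus E^*\oplus V^*$, the new Schouten tensor is
\[
\hat\Rho(\psi)=\Rho(\psi)+\nabla_\psi\Upsilon+\tfrac12\{\{\Upsilon,\psi\},\Upsilon\}+\tfrac16\{\{\{\Upsilon,\psi\},\Upsilon\},\Upsilon\}+\dots .
\]
The series terminates by homogeneity. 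The brackets are algebraic, built from the Lie bracket of $\mathfrak{sl}(n+2)$, which we realize via the filtration $E$, $V$, $Q\cong E\otimes V$.

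First, insert the identification from Remark \ref{rem4.2}: $\Upsilon^E=-\tfrac12 d^Ef$, $\Upsilon^V=d^Vf$ and $\Upsilon^Q(\mathcal L(\xi,\eta))=\tfrac34 df(\xi)df(\eta)-df(\iota(\mathcal L(\xi,\eta)))$.

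Second, make the algebraic brackets explicit. Working in $\mathfrak{sl}(n+2)$ with the grading $\mathfrak g_{-2}\oplus\mathfrak g_{-1}^E\oplus\mathfrak g_{-1}^V\oplus\mathfrak g_0\oplus\dots$, I compute the following pieces:
\begin{itemize}
\item $\{\Upsilon,\psi\}\in\mathfrak g_0$ for $\psi$ in each of $E$, $V$ and $Q$;
\item its action back on $\Upsilon$;
\item the cubic term, which only survives when $\psi\in Q$ because of degree counting.
\end{itemize}
This accounts for the quadratic and cubic terms in $df$ in the stated formulas.

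Third, convert the result into a $\binom02$-tensor on $TM=E\oplus V\oplus\iota(Q)$. Two corrections arise here, and I treat them carefully.
\begin{itemize}
\item The second slot of $\hat\Rho$ must be decomposed with respect to $\hat\iota$, not $\iota$. Evaluating $\hat\Rho(\psi,\zeta_1)$ therefore uses \eqref{hat-zeta}, which rewrites $\zeta_1=\hat\zeta_1+df(\eta_1)\xi_1+\tfrac12df(\xi_1)\eta_1$. This is what produces the terms of the form $(\nabla_\eta df)(\xi_1)df(\eta_1)+\tfrac12df(\xi_1)(\nabla_\eta df)(\eta_1)$.
\item The term $\nabla_\psi\Upsilon$ must be re-expressed through $\nabla df$. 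For $\Upsilon^E$ and $\Upsilon^V$ this is direct. For $\Upsilon^Q$ the Leibniz rule gives products $df\cdot\nabla df$. These must combine with the correction terms from the previous item, and also with the change of the first-slot direction $\zeta_1$ when $\psi=\zeta_1$. The latter is handled using Proposition \ref{prop:Weyl-transf} to relate $\nabla$ and $\hat\nabla$ on $Q$.
\end{itemize}

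Finally, I check consistency against the ad hoc Definition \ref{def-Rho} on the simplest components, as an independent test. The $H\otimes H$ components can be derived directly. For example, $\hat\Rho(\eta_1,\eta_2)=\tfrac1{n-1}\widehat{\Ric}{}^{\nabla^V}$ can be computed from the second line of \eqref{eq:nv-trans}, in the style of the projective change of Ricci. Similarly, $\hat\Rho(\xi_1,\xi_2)$ follows from the trace of $\hat\tau$ using \eqref{eq:nv-trans} and \eqref{eq:nq-trans}. Matching these pins down the sign and normalization conventions used in the conversion, including the $-\tfrac12$ and $-\tfrac14$ coefficients. The antisymmetric parts on $H\otimes\iota(Q)$ can be cross-checked through $R^L$ and Proposition \ref{prop:L-trans}, since the change of connection on $L$ is a one-form $\theta$ with $\hat R^L=R^L+d\theta$.

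The main obstacle is the $\iota(Q)\otimes\iota(Q)$ component, together with the mixed components involving $\zeta$. There, conventions for the bracket, the reordering of $\hat\zeta$ versus $\zeta$ in both slots, and the cubic and quartic terms all interact. I would organize that computation by first assuming $df$ vanishes at the point, which gives the linear terms, and then recovering the nonlinear terms by a homogeneity count.
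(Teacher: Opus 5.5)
Your overall route is the one the paper indicates: the general transformation law for $\Rho$, the identification of $\Upsilon$ from Remark~\ref{rem4.2}, and conversion to a $\binom{0}{2}$-tensor via \eqref{hat-zeta}. The gap is that the transformation law you start from is the one for $|1|$-graded geometries, and for path geometries it lacks terms the stated formulas depend on. Here $\mathfrak p_+=\mathfrak g_1\oplus\mathfrak g_2$ with $[\mathfrak g_1,\mathfrak g_1]=\mathfrak g_2\neq0$. Expanding $\hat\sigma^*\omega=\operatorname{Ad}(g^{-1})\sigma^*\omega+g^{-1}dg$ for $g=\exp(\Upsilon_1)\exp(\Upsilon_2)$ gives three kinds of terms besides the ones you list:
\begin{itemize}
\item[(a)] the single bracket $-[\Upsilon_2,\psi_{-1}]\in\mathfrak g_1$. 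So $[\Upsilon,\psi]$ is not $\mathfrak g_0$-valued as you assume; for $\psi\in Q$ it also has a $\mathfrak g_{-1}$-part, which is exactly the $\hat\iota$ versus $\iota$ shift.
\item[(b)] $-\tfrac12[\Upsilon_1,\nabla_\psi\Upsilon_1]\in\mathfrak g_2$, coming from $\exp(-\Upsilon_1)\,d\exp(\Upsilon_1)=d\Upsilon_1-\tfrac12[\Upsilon_1,d\Upsilon_1]$.
\item[(c)] $-[\Upsilon_1,\Rho_1(\psi)]\in\mathfrak g_2$, because $\operatorname{Ad}(\exp(-\Upsilon_1))$ is not the identity on $\mathfrak p_+$.
\end{itemize}
Each omission breaks a displayed line.
\begin{itemize}
\item Without (a), $\Upsilon^Q$ enters only $Q^*$-valued terms, so your $\hat\Rho(\xi,\eta)$ can never contain $df(\iota(\mathcal L(\xi,\eta)))$. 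In fact (a) contributes $-\Upsilon^Q(\mathcal L(\xi,\eta))=-\tfrac34df(\xi)df(\eta)+df(\iota(\mathcal L(\xi,\eta)))$. Together with $-\tfrac14df(\xi)df(\eta)$ from the quadratic term, this yields the second line.
\item Without (c), rewriting $\zeta_1=\hat\zeta_1+df(\eta_1)\xi_1+\tfrac12df(\xi_1)\eta_1$ leaves $df(\eta_1)\Rho(\psi,\xi_1)+\tfrac12df(\xi_1)\Rho(\psi,\eta_1)$ in $\hat\Rho(\psi,\zeta_1)$, and these appear in none of the stated formulas. By invariance of the pairing, (c) cancels them exactly. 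Equivalently, transport $\hat\Rho(\psi)$ back with $\operatorname{Ad}(\exp\Upsilon)$ and pair with the old splitting, after which $\Rho$ drops out.
\item Without (b), the $\nabla df\cdot df$ products in the $\iota(Q)$-slot come only from $\nabla_\psi\Upsilon^Q$ (coefficient $\tfrac34$ on each) and from the conversion ($-\tfrac12$ and $+\tfrac12$). That gives $\tfrac14(\nabla_\psi df)(\xi_1)df(\eta_1)+\tfrac54df(\xi_1)(\nabla_\psi df)(\eta_1)$, so the conversion does not by itself produce the coefficients you attribute to it.
\end{itemize}

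Two further steps would not work as described.
\begin{itemize}
\item Your degree count is wrong. $\operatorname{ad}(\Upsilon_1)^3$ maps $\mathfrak g_{-1}$ nontrivially into $\mathfrak g_2$, and $[\Upsilon_2,[\Upsilon_1,\psi_{-1}]]$ is also cubic in $df$ because $\Upsilon^Q$ is quadratic. So cubic terms arise for $\psi\in H$ too; they feed the cubic term of $\hat\Rho(\eta,\zeta_1)$.
\item Evaluating at points with $df(x)=0$ keeps only the second-derivative terms. The terms linear in $df$, such as $df(\iota(\mathcal L(\xi,\eta)))$, vanish there along with all products. A homogeneity count fixes degrees, not coefficients, so there is no substitute for the full expansion (or for a separate cocycle argument using composition of rescalings).
\end{itemize}
Finally, do not expect your cross-checks against Definition~\ref{def-Rho} merely to calibrate conventions. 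Along the leaves of $V$ the connection changes projectively by the second line of \eqref{eq:nv-trans}, and $\tau(V,V)=0$. Hence $\tfrac1{n-1}\operatorname{tr}(R(\ ,\eta_1)\eta_2)$ changes by $-(\nabla_{\eta_1}df)(\eta_2)+df(\eta_1)df(\eta_2)$, the opposite sign from the third displayed line. Definition~\ref{def-Rho} and the displayed transformation law therefore do not use the same sign convention for $\Rho|_{V\times V}$, and this has to be resolved before such checks can fix anything.
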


Let us give a simple example that illustrates how to use our results for an elementary construction of invariant differential operators. Consider the line bundle $L$ introduced in Section \ref{4.2}. Restricting derivatives by Weyl connections to directions in $E$, a choice of Weyl structure defines an operator $(\nabla_E)^2:\Gamma(L)\to\Gamma(S^2E^*\otimes L)$, which is explicitly given by 
$$
((\nabla_E)^2\rho)(\xi_1,\xi_2)=\nabla_{\xi_1}\nabla_{\xi_2}\rho-\nabla_{\nabla_{\xi_1}\xi_2}\rho.
$$  
The change of this expression caused by a change of Weyl structure is a straightforward computation using the first line of \eqref{n-trans-L} in both terms and \eqref{eq:ne-trans} in the second term. Using the fact that the expression $df(\xi_1)\nabla_{\xi_2}\rho$ is automatically symmetric in $\xi_1$ and $\xi_2$, one verifies that $$((\hat\nabla_E)\rho)(\xi_1,\xi_2)=((\nabla_E)^2\rho)(\xi_1,\xi_2)-\frac12(\nabla_{\xi_1}df)(\xi_2)\rho-\tfrac14 df(\xi_1)df(\xi_2)\rho.
$$
In view of the first formula in Proposition \ref{prop:Rho-trans}, we readily conclude that, denoting by $\Rho^{EE}$ the restriction of the Schouten tensor to $E\times E$, the expression $D\rho:=(\nabla_E)^2\rho-\Rho^{EE}\rho$ defines an invariant differential operator $\Gamma(L)\to\Gamma(S^2E^*\otimes L)$ of order two.

% \bib, bibdiv, biblist are defined by the amsrefs package.

\end{document}